\newtheorem{thm}{Theorem}[section]
\newtheorem{prop}[thm]{Proposition}
\newtheorem{lem}[thm]{Lemma}
\newtheorem{cor}[thm]{Corollary}
\newtheorem{prop-def}{thm}[section]
\theoremstyle{definition}
\newtheorem{defn}[thm]{Definition}
\newtheorem{remark}[thm]{Remark}
\newtheorem{exam}[thm]{Example}
\newcommand{\nc}{\newcommand}
 \nc{\mbibitem}[1]{\bibitem{#1}} % Use this to show number
 \nc{\mrm}[1]{{\rm #1}}
\nc{\pl}{\cdot}
 \nc{\la}{\longrightarrow}
\nc{\ot}{\otimes}
 \nc{\rar}{\rightarrow}
\nc{\PLA}{{\mathrm{Leib}}}
\nc{\bfk}{{\bf k}}
\nc{\C}{{\mathrm{C}}}
\nc{\RBA}{\mathsf{MLeib}}
\nc{\RBO}{\mathsf{MO}}
\nc{\End}{\mrm{End}}
\nc{\Ext}{\mrm{Ext}}
\nc{\Fil}{\mrm{Fil}}
\nc{\Fr}{\mrm{Fr}}
\nc{\Frob}{\mrm{Frob}}
\nc{\Gal}{\mrm{Gal}}
\nc{\GL}{\mrm{GL}}
\nc{\Hom}{\mrm{Hom}}
\nc{\Hoch}{\mrm{Hoch}}
\nc{\hsr}{\mrm{H}}
\nc{\hpol}{\mrm{HP}}
\nc{\im}{\mrm{im}}
\nc{\Id}{\mrm{Id}}
\nc{\Irr}{\mrm{Irr}}
\nc{\incl}{\mrm{incl}}
\nc{\length}{\mrm{length}}
\nc{\NLSW}{\mrm{NLSW}}
\nc{\Lie}{\mrm{Lie}}
\nc{\mchar}{\rm char}
\nc{\mpart}{\mrm{part}}
\nc{\ql}{{\QQ_\ell}}
\nc{\qp}{{\QQ_p}}
\nc{\rank}{\mrm{rank}}
\nc{\rcot}{\mrm{cot}}
\nc{\rdef}{\mrm{def}}
\nc{\rdiv}{{\rm div}}
\nc{\rmH}{ {\mathrm{H}}}
\nc{\rtf}{{\rm tf}}
\nc{\rtor}{{\rm tor}}
\nc{\res}{\mrm{res}}
\nc{\Sh}{{\mathrm{Sh}}}
\nc{\SL}{\mrm{SL}}
\nc{\Spec}{\mrm{Spec}}
\nc{\sgn}{{\mathrm{sgn}}}
\nc{\tor}{\mrm{tor}}
\nc{\Tr}{\mrm{Tr}}
\nc{\tr}{\mrm{tr}}
\nc{\wt}{\mrm{wt}}
\nc{\op}{\mrm{op}}
\nc{\BA}{{\mathbb A}}   \nc{\CC}{{\mathbb C}}
\nc{\DD}{{\mathbb D}}   \nc{\EE}{{\mathbb E}}
\nc{\FF}{{\mathbb F}}   \nc{\GG}{{\mathbb G}}
\nc{\HH}{ \mathrm{HH}}   \nc{\LL}{{\mathbb L}}
\nc{\NN}{{\mathbb N}}   \nc{\PP}{{\mathbb P}}
\nc{\QQ}{{\mathbb Q}}   \nc{\RR}{{\mathbb R}}
\nc{\TT}{{\mathbb T}}   \nc{\VV}{{\mathbb V}}
\nc{\ZZ}{{\mathbb Z}}   \nc{\TP}{\widetilde{P}}
\nc{\m}{{\mathbbm m}}
\nc{\cala}{{\mathcal A}}    \nc{\calc}{{\mathcal C}}
\nc{\cald}{\mathcal{D}}     \nc{\cale}{{\mathcal E}}
\nc{\calf}{{\mathcal F}}    \nc{\calg}{{\mathcal G}}
\nc{\calh}{{\mathcal H}}    \nc{\cali}{{\mathcal I}}
\nc{\call}{{\mathcal L}}    \nc{\calm}{{\mathcal M}}
\nc{\caln}{{\mathcal N}}    \nc{\calo}{{\mathcal O}}
\nc{\calp}{{\mathcal P}}    \nc{\calr}{{\mathcal R}}
\nc{\cals}{{\mathcal S}}    \nc{\calt}{{\Omega}}
\nc{\calv}{{\mathcal V}}    \nc{\calw}{{\mathcal W}}
\nc{\calx}{{\mathcal X}}
\nc{\fraka}{{\mathfrak a}}
\nc{\frakb}{\mathfrak{b}}
\nc{\frakg}{{\frak g}}
\nc{\frakl}{{\frak l}}
\nc{\fraks}{{\frak s}}
\nc{\frakh}{{\frak h}}
\nc{\frakm}{{\frak m}}
\nc{\frakM}{{\frak M}}
\nc{\frakp}{{\frak p}}
\nc{\frakW}{{\frak W}}
\nc{\frakX}{{\frak X}}
\nc{\frakS}{{\frak S}}
\nc{\frakA}{{\frak A}}
\nc{\frakC}{{\frak{C}}}
\nc{\frakx}{{\frakx}}
\nc{\lir}[1]{\textcolor{red}{\underline{Li:}#1 }}
\begin{document}

\title[ Crossed homomorphisms on Leibniz algebras]{ Cohomology and Deformation theory of crossed homomorphisms on Leibniz algebras}

\author{Yizheng Li}
\address{
 School of Mathematical Sciences, Qufu Normal University,  Qufu 273165, P. R. of China}
\email{yzli1001@163.com}

\author{$\mbox{Dingguo\ Wang}^1$}\footnote{Corresponding author}
\address{School of Mathematical Sciences, Qufu Normal University,  Qufu 273165, P. R. of China}
\email{dgwang@qfnu.edu.cn}

\date{\today}

\begin{abstract}
 In this paper,  we construct a differential graded Lie algebra whose Maurer-Cartan elements
are given by crossed homomorphisms on Leibniz algebras. This allows us to define cohomology
for a crossed homomorphism. Finally, we study linear deformations, formal deformations and extendibility of finite order deformations of a crossed homomorphism in terms of the cohomology theory.
\end{abstract}

\subjclass[2010]{
16E40   %(co)homology of rings and algebras
16S80   %deformations of rings
17A30 %Rota-Baxter pre-Lie algebra
}

\keywords{crossed homomorphism, cohomology,  Maurer-Cartan elements, deformation}

\maketitle

\tableofcontents

\allowdisplaybreaks

\section*{Introduction}

In 1960, Baxter \cite{B60} introduced the notion of Rota-Baxter operators on associative algebras in his study of fluctuation theory in probability. Rota-Baxter operators have been found many applications, including in Connes-Kreimer's algebraic approach
to the renormalization in perturbative quantum field theory \cite{CK00}.
For more details on the Rota-Baxter operator, see \cite{G12}.

The notion of crossed homomorphisms of Lie algebras was first introduced by Lue \cite{Lue} in the study of non-abelian extensions of Lie algebras.  A crossed homomorphism is nothing but a differential
operator of weight 1. The authors  showed that the category of weak representations
(resp. admissible representations) of Lie-Rinehart algebras (resp. Leibniz pairs) is a left module
category over the monoidal category of representations of Lie algebras using crossed homomorphisms \cite{PST}. Recently, the author considerd crossed homomorphisms between associative algebras \cite{Das}.

The concept of Leibniz algebras was introduced  by Loday \cite{L93, LP93} in the study of the algebraic $K$-theory. Relative Rota-Baxter
operators on Leibniz algebras were studied in \cite{ST2}, which is the main ingredient in the study of the twisting theory and the bialgebra theory for Leibniz algebras \cite{ST1}. Recently,  the author has considered  weighted relative Rota-Baxter operators  on Leibniz algebras  in \cite{Das1}.   Our aim in this paper is to consider crossed homomorphisms between Leibniz algebras using Leibniz representation  introduced by  \cite{Das1}, More  precisely, we  construct a differential graded Lie algebra whose Maurer-Cartan elements
are given by crossed homomorphisms on Leibniz algebras. This allows us to define cohomology
for a crossed homomorphism. Finally, we study linear deformations, formal deformations and extendibility of finite order deformations of a crossed homomorphism in terms of the cohomology theory.

The paper is organized as follows. In Section 1, we recall some basic definitions about  Leibniz algebras and their cohomology. In Section 2, we consider crossed homomorphisms between Leibniz algebras.  In Section 3, we construct a differential graded Lie algebra whose Maurer-Cartan elements
are given by crossed homomorphisms on Leibniz algebras and define cohomology
for a crossed homomorphism.  In Section 4, we study linear deformations, formal deformations and extendibility of finite order deformations of a crossed homomorphism in terms of the cohomology theory.

 Throughout this paper, let $\bfk$ be a field of characteristic $0$.  Except specially stated,  vector spaces are  $\bfk$-vector spaces and  all    tensor products are taken over $\bfk$.

\section{Leibniz algebras, representations and their cohomology theory}\
We start with the background of Leibniz algebras and their cohomology that we refer the reader to~ \cite{cuvier,LP93, Das1} for more details.

\begin{defn}
A Leibniz algebra is  a vector space $\frakg$ together with a bilinear operation (called bracket) $[\cdot, \cdot]_\frakg : \frakg \otimes \frakg \rightarrow  \frakg$  satisfying
\begin{eqnarray*}
[x, [y, z]_\frakg]_\frakg=[[x, y]_\frakg, z]_\frakg+[y, [x, z]_\frakg]_\frakg,~ \text{ for } x, y, z \in \frakg.
\end{eqnarray*}
\end{defn}

A Leibniz algebra as above may be denoted by the pair $(\frakg, [\cdot, \cdot])$ or simply by $\frakg$ when no confusion arises. A Leibniz algebra whose bilinear bracket is skewsymmetric is nothing but a Lie algebra. Thus, Leibniz algebras are the non-skewsymmetric analogue of Lie algebras.

\begin{defn}
A representation of a Leibniz algebra $(\frakg, [\cdot, \cdot]_\frakg)$ consists of a triple $(V, \rho^L, \rho^R)$ in which $V$ is a vector space and $\rho^L, \rho^R : \frakg \rightarrow gl(V)$ are linear maps satisfying for $x, y \in \frakg$,
\begin{align*}
\begin{cases}
\rho^L ([x,y]_\frakg) = \rho^L (x)  \rho^L (y) - \rho^L (y)  \rho^L (x),\\
\rho^R ([x,y]_\frakg) = \rho^L (x) \rho^R (y) - \rho^R (y)  \rho^L (x),\\
\rho^R ([x,y]_\frakg) = \rho^L (x)  \rho^R (y) + \rho^R (y)  \rho^R (x).
\end{cases}
\end{align*}
\end{defn}

It follows that any Leibniz algebra $\frakg$ is a representation of itself with
\begin{align*}
    \rho^L (x) = L_x = [x, ~]_\frakg ~~ \text{ and } ~~ \rho^R (x) = R_x = [~, x]_\frakg, \text{ for } x \in \frakg.
\end{align*}
Here $L_x$ and $R_x$ denotes the left and right multiplications by $x$, respectively. This is called the regular representation.

\medskip

Let $(\frakg, [\cdot, \cdot]_\frakg)$ be a Leibniz algebra and  $(V, \rho^L, \rho^R)$ be a representation of it. The cohomology of the Leibniz algebra $\frakg$ with coefficients in $V$ is the
cohomology of the cochain complex $\{ C^\ast(\frakg, V), \delta \}$, where $C^n (\mathfrak{g}, V) = \mathrm{Hom}(\frakg^{\otimes n}, V )$ for $n \geq 0$, and the
coboundary operator $\delta:C^n(\frakg, V ) \rightarrow C^{n+1}(\frakg, V )$ given by
\begin{align*}
&(\delta f)(x_1,\ldots, x_{n+1})\\
&=\sum^{n}_{i=1}(-1)^{i+1}\rho^L(x_i)f(x_1,\ldots, \hat{x}_i, \ldots, x_{n+1}) + (-1)^{n+1}\rho^R(x_{n+1})f(x_1,\ldots,  x_{n})\\
&+\sum_{1\leq i< j\leq n+1} (-1)^{i}f(x_1,\ldots, \hat{x}_i, \ldots, x_{j-1}, [x_i, x_j]_\frakg, x_{j+1},\ldots, x_{n+1}),
\end{align*}
for $x_1, \ldots, x_{n+1}\in \frakg$. The corresponding cohomology groups are denoted by $H^{\ast}(\frakg, V).$ This cohomology has been first appeared in \cite{cuvier} and rediscover by Loday and Pirashvili \cite{LP93}. This cohomology is also the Loday-Pirashvili cohomology.

\begin{defn}(\cite{B97, FM08})  The graded vector space $C^{\ast}(\mathfrak{g}, \mathfrak{g})$
 equipped with  Balavoine bracket
\begin{eqnarray*}
\llbracket P, Q\rrbracket := P \overline{\diamond} Q - (-1)^{pq}Q \overline{\diamond} P ~~~~~~~~~\forall P \in C^{p+1}(\mathfrak{g}, \mathfrak{g}), Q \in C^{q+1}(\mathfrak{g}, \mathfrak{g})
\end{eqnarray*}
is a graded Lie algebra, where $P \overline{\diamond} Q \in C^{p+q+1}(\mathfrak{g}, \mathfrak{g})$ is defined by
\begin{eqnarray*}
P \overline{\diamond} Q =\sum_{k=1}^{p+1}(-1)^{(k-1)q}P\diamond_{\mathrm{\bfk}} Q
\end{eqnarray*}
and $\diamond_{\mathrm{\bfk}}$ is defined by
\begin{eqnarray*}
&&P \diamond_{\mathrm{\bfk}} Q(x_1, \cdots , x_{p+q+1})\\
&=& \sum_{\sigma\in \mathbb{S}_{(k-1, q)}}(-1)^{\sigma}P(x_{\sigma(1)},\cdots , x_{\sigma(k-1)}, Q(x_{\sigma(k)},\cdots , x_{\sigma(k+q-1)}, x_{k+q}), x_{k+q+1}, \cdots,x_{p+q+1} ),
\end{eqnarray*}
for all  $x_1, \cdots, x_{p+q+1}\in \mathfrak{g}$.

Moreover, $\mu_\mathfrak{g}:  \mathfrak{g}\otimes \mathfrak{g}\rightarrow \mathfrak{g}$ is  a Leibniz bracket if and only if $\llbracket\mu_\mathfrak{g}, \mu_\mathfrak{g}\rrbracket= 0$, i. e. $\mu_\mathfrak{g}$is a Maurer
\\ -Cartan element of the graded Lie algebra $(C^{\ast}(\mathfrak{g}, \mathfrak{g}), \llbracket-, -\rrbracket)$.
\end{defn}

\begin{defn}
Let $(\frakg, [\cdot, \cdot]_\frakg)$ and $(\frakh, [\cdot, \cdot]_\frakh)$ be two Leibniz algebras. We say that $\frakh$ is a Leibniz $\frakg$-representation if there are bilinear
maps $\rho^L, \rho^R: \frakg \rightarrow$ End$(\frakh)$ that make $(\frakh, \rho^L, \rho^R)$ into a representation of the Leibniz algebra $\frakg$
satisfying additionally
\begin{eqnarray*}
&&  (La)\qquad\rho^{L}(x)[h, k]_\frakh=[\rho^{L}(x)h, k]_\frakh+[h, \rho^{L}(x)k]_\frakh,\\
&& (Lb)\qquad [h, \rho^{R}(x)k]_\frakh=\rho^{R}(x)[h, k]_\frakh+[k, \rho^{R}(x)h]_\frakh,\\
&& (Lc)\qquad [h, \rho^{L}(x)k]_\frakh=[\rho^{R}(x)h, k]_\frakh+\rho^{L}(x)[h, k]_\frakh,
\end{eqnarray*}
for $h, k\in \frakh, x \in \frakg$.
\end{defn}
Note that, for any Leibniz algebra $(\frakg, [\cdot, \cdot]_\frakg)$, the regular representation is a Leibniz $\frakg$-representation.

% Let $V$ be a graded vector space.
%   Let $$T(sV)=k\oplus sV\oplus (sV)^{\otimes 2}\oplus \cdots.$$
%   Let $$\mathfrak{C}_{Alg}(V)=\mathrm{Hom}(T(sV), sV).$$
%   One can generalise the construction of Gerstenhaber and obtain a graded Lie algebra structure on $\mathfrak{C}_{Alg}(V)$.

\section{Crossed homomorphisms on  Leibniz algebras}

In  this section, we study  crossed homomorphisms between  Leibniz algebras.

\begin{defn}\cite{Das1}\label{Def: crossed homomorphisms} Let $(\frakg, [\cdot, \cdot]_\frakg)$ and $(\frakh, [\cdot, \cdot]_\frakh)$ be Leibniz algebras  and $(\frakh, \rho^L, \rho^R)$ be a Leibniz $\frakg$-representation. If a linear map $H:\frakg\rightarrow \frakh$ is said to be a  crossed homomorphism from $\frakg$ to $\frakh$ such that the following equation
\begin{eqnarray}\label{Def: crossed homomorphisms}
H([x, y]_\frakg)=\rho^L(x)H(y)+\rho^R(y)H(x)+[H(x), H(y)]_\frakh
\end{eqnarray}
holds for any  $x, y \in \frakg$.
\end{defn}

\begin{remark}
A crossed homomorphism from $\frakg$ to $\frakg$ with respect to the regular representation is also called
a differential operator of weight 1.
\end{remark}

\begin{exam}
If the action $\rho^L, \rho^R$ of $\frakg$ on $\frakh$ is zero, then any crossed homomorphism from $\frakg$ to $\frakh$ is
nothing but a Leibniz algebra homomorphism.
\end{exam}

\begin{defn}
Let $H, H': \frakg \rightarrow \frakh$ be two crossed homomorphisms from $\frakg$ to $\frakh$.
A morphism from $H$ to $H'$ consists of two Leibniz algebra morphisms $\phi_\frakg: \frakg \rightarrow \frakg$ and
$\phi_\frakh: \frakh \rightarrow \frakh$ satisfying $\phi_\frakh \circ H = H' \circ \phi_\frakg$, $\phi_\frakh(\rho^L(x)h) = \rho^L(\phi_\frakg(x))\phi_\frakh(h)$ and $\phi_\frakh(\rho^R(x)h) = \rho^R(\phi_\frakg(x))\phi_\frakh(h)$, for all $x\in \frakg$ and $h\in \frakh$.
\end{defn}

One can  construct a new Leibniz $\frakg$-representation.

\begin{lem}\label{Lem: new-representation}
Let $H: \frakg \rightarrow \frakh$ be a crossed homomorphism. Define maps $\rho_H^L, \rho_H^R: \frakg \rightarrow$ End$(\frakh)$ by
\begin{eqnarray} \label{new-equations}
&&\rho^L_H(x)h:=\rho^L(x)h+[H(x), h]_\frakh,\nonumber\\
&& \rho^R_H(x)h:=\rho^R(x)h+[h, H(x)]_\frakh,
\end{eqnarray}
for $x\in \frakg$ and $h\in \frakh$. Then $(\frakh, \rho^L_H, \rho^R_H)$ is a Leibniz $\frakg$-representation.
\end{lem}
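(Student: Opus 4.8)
The plan is to verify directly that the triple $(\frakh, \rho^L_H, \rho^R_H)$ satisfies the five defining identities: the three representation axioms of a Leibniz representation and the three extra conditions $(La)$, $(Lb)$, $(Lc)$ — noting that the first representation axiom for $\rho^L_H$ together with $(La)$ will be the most delicate. First I would fix notation: write $\rho^L_H(x) = \rho^L(x) + L_{H(x)}$ and $\rho^R_H(x) = \rho^R(x) + R_{H(x)}$, where $L_a = [a,\cdot]_\frakh$ and $R_a = [\cdot,a]_\frakh$ are the regular left/right multiplications on $\frakh$. The key structural inputs are: (i) the original action $(\rho^L,\rho^R)$ already satisfies all six conditions because $\frakh$ is a Leibniz $\frakg$-representation; (ii) the regular representation of $\frakh$ on itself satisfies the same six conditions (the three representation axioms are the Leibniz identity for $\frakh$, and $(La),(Lb),(Lc)$ are again the Leibniz identity rearranged); and (iii) the crossed-homomorphism equation \eqref{Def: crossed homomorphisms}, which is precisely what is needed to control the cross terms $L_{H([x,y]_\frakg)}$ and $R_{H([x,y]_\frakg)}$.

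The main computation goes identity by identity. For the first representation axiom, I expand
\[
\rho^L_H([x,y]_\frakg) = \rho^L([x,y]_\frakg) + L_{H([x,y]_\frakg)},
\]
substitute $H([x,y]_\frakg) = \rho^L(x)H(y) + \rho^R(y)H(x) + [H(x),H(y)]_\frakh$, and then compare with
\[
\rho^L_H(x)\rho^L_H(y) - \rho^L_H(y)\rho^L_H(x)
= \big(\rho^L(x)+L_{H(x)}\big)\big(\rho^L(y)+L_{H(y)}\big) - \big(\rho^L(y)+L_{H(y)}\big)\big(\rho^L(x)+L_{H(x)}\big).
\]
Expanding the right side yields four groups of terms: the pure-$\rho^L$ commutator (which equals $\rho^L([x,y]_\frakg)$ by axiom for the original action), the pure-$L$ commutator $L_{H(x)}L_{H(y)} - L_{H(y)}L_{H(x)}$ (which equals $L_{[H(x),H(y)]_\frakh}$ by the regular representation axiom for $\frakh$), and the two mixed groups $\rho^L(x)L_{H(y)} - L_{H(y)}\rho^L(x)$ and $L_{H(x)}\rho^L(y) - \rho^L(y)L_{H(x)}$. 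For these mixed groups one applies $(La)$ for the original $\frakg$-representation — rewritten as the operator identity $\rho^L(x)L_h - L_h\rho^L(x) = L_{\rho^L(x)h}$ — to get $L_{\rho^L(x)H(y)}$ from the first and, handling the second similarly (here one also needs $(Lc)$ to convert $\rho^L(y)L_{H(x)}$-type terms appropriately, producing $L_{\rho^R(y)H(x)}$). Collecting, the right side becomes $\rho^L([x,y]_\frakg) + L_{\rho^L(x)H(y) + \rho^R(y)H(x) + [H(x),H(y)]_\frakh}$, which matches the left side by \eqref{Def: crossed homomorphisms}. The second and third representation axioms, as well as $(La)$, $(Lb)$, $(Lc)$ for the new action, are proved by the same mechanism: expand, split into original-action terms (handled by the corresponding hypothesis on $(\rho^L,\rho^R)$), regular-representation terms (handled by the Leibniz identity in $\frakh$), and mixed terms (handled by the appropriate one of $(La),(Lb),(Lc)$), then reassemble using the crossed-homomorphism equation.

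The main obstacle is purely bookkeeping: in the non-skewsymmetric Leibniz setting the left and right actions interact asymmetrically, so one must be careful to apply exactly the right auxiliary identity among $(La)$, $(Lb)$, $(Lc)$ to each mixed term, and to track signs and the order of arguments in $[\cdot,\cdot]_\frakh$. I expect no conceptual difficulty — each identity reduces, after the substitution of \eqref{Def: crossed homomorphisms}, to a linear combination of the hypotheses — but the verification of the mixed second and third representation axioms, where both $L_{H(x)}$ and $R_{H(x)}$ appear together with $\rho^L,\rho^R$, requires the most care and is where a sign error is most likely to creep in.
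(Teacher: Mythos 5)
Your proposal is correct and follows essentially the same route as the paper: a direct term-by-term verification of all six identities, expanding $\rho^L_H(x)=\rho^L(x)+[H(x),\cdot]_\frakh$ and $\rho^R_H(x)=\rho^R(x)+[\cdot,H(x)]_\frakh$, handling the pure $\rho$-terms by the original representation axioms, the pure bracket terms by the Leibniz identity of $\frakh$, the mixed terms by the appropriate one of $(La)$, $(Lb)$, $(Lc)$, and reassembling via the crossed-homomorphism equation. Your identification of exactly which auxiliary identity controls each mixed group (e.g.\ $(La)$ giving $[\rho^L(x)H(y),h]_\frakh$ and $(Lc)$ giving $[\rho^R(y)H(x),h]_\frakh$ in the first axiom) matches the paper's computation precisely.
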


\begin{proof}

First we  prove that $\rho_H^L, \rho_H^R $ satisfy the conditions L(a)-L(c) as follows.

For any $x\in \frakg$ and $h, k\in \frakh$, we have
\begin{eqnarray*}
(La')\qquad\qquad&&[\rho^{L}_H(x)h, k]_\frakh+[h, \rho^{L}_H(x)k]_\frakh\\
&=&  [\rho^{L}(x)h, k]_\frakh +[[H(x), h]_\frakh, k]_\frakh +[h, \rho^{L}(x)k]_\frakh+[h, [H(x), k]_\frakh]_\frakh\\
&=& \rho^L(x)[h, k]_\frakh+[H(x), [h, k]_\frakh]_\frakh\\
&=& \rho^L_H(x)[h, k]_\frakh.
\end{eqnarray*}
Similar to prove that
\begin{eqnarray*}
(Lb')\qquad\qquad [h, \rho_H^{R}(x)k]_\frakh=\rho_H^{R}(x)[h, k]_\frakh+[k, \rho_H^{R}(x)h]_\frakh.
\end{eqnarray*}
Furthermore, we have
\begin{eqnarray*}
(Lc')\qquad\qquad &&[\rho_H^{R}(x)h, k]_\frakh+\rho_H^{L}(x)[h, k]_\frakh\\
 &=&[\rho^{R}(x)h, k]_\frakh+[[h, H(x)]_\frakh, k]_\frakh+\rho^{L}(x)[h, k]_\frakh + [H(x),[h, k]_\frakh]_\frakh  \\
&=& [h, \rho^{L}(x)k]_\frakh+[h, [H(x), k]_\frakh]_\frakh\\
 &=& [h, \rho^{L}_H(x)k]_\frakh.
\end{eqnarray*}
Next we prove that $ (\frakh, \rho_H^L, \rho_H^R) $  is a representation over $(\frakg, [\cdot, \cdot]_\frakg)$.

For any $x, y\in \frakg$ and $h\in \frakh$, we have
\begin{eqnarray*}
&& \rho_H^L (x)  \rho_H^L (y)h - \rho_H^L (y)  \rho_H^L (x)h\\
&=& \rho_H^L (x) (\rho^L (y)h+[H(y), h]_\frakh)-\rho_H^L (y) (\rho^L (x)h+[H(x), h]_\frakh)\\
&=& \rho^L (x)\rho^L (y)h+[H(x), \rho^L (y)h]_\frakh+\rho^L (x)[H(y), h]_\frakh +[H(x), [H(y), h]_\frakh]_\frakh \\
&&-\rho^L (y)\rho^L (x)h-[H(y), \rho^L (x)h]_\frakh -\rho^L (y)[H(x), h]_\frakh -[H(y), [H(x), h]_\frakh]_\frakh \\
&\stackrel{(La, Lc)}{=}& \rho^L ([x, y]_\frakg)h+ [\rho^L (y)H(x), h]_\frakh+[\rho^R (x)H(y), h]_\frakh +[H(x), H(y)]_\frakh, h]_\frakh \\
&=& \rho^L ([x, y]_\frakg)h +[H([x, y]_\frakg), h]_\frakh\\
&=& \rho^L_H ([x, y]_\frakg)h.
\end{eqnarray*}
Similarly, we have
\begin{eqnarray*}
\rho_H^R([x, y]_\frakg)=\rho_H^L (x)  \rho_H^R (y) - \rho_H^R (y)  \rho_H^L (x).
\end{eqnarray*}
 Furthermore, we have
\begin{eqnarray*}
 &&\rho_H^R(y) \rho_H^L(x)h+\rho_H^R(y) \rho_H^R(x)h\\
 &=& \rho_H^R(y)(\rho^L(x)h+[H(x), h]_\frakh)+\rho_H^R(y) (\rho^R(x)h+[h, H(x)]_\frakh)\\
 &=& \rho^R(y)\rho^L(x)h+[\rho^L(x)h, H(y)]_\frakh+ \rho^R_H(y)[H(x), h]_\frakh \\
 && +\rho^R(y)\rho^R(x)h +[\rho^R(x)h, H(y)]_\frakh +\rho^R_H(y)[h, H(x)]_\frakh\\
 &\stackrel{(La, Lb', Lc)}{=}& 0.
 \end{eqnarray*}
Hence,   $(\frakh, \rho^L_H, \rho^R_H)$ is a Leibniz $\frakg$-representation.
\end{proof}

Since $(\frakh, [\cdot, \cdot]_\frakh)$ is  a Leibniz $\frakg$-representation, there is a semi-direct product Leibniz algebra
structure on $\frakg \oplus\frakh$ given by
  \begin{eqnarray*}
[(x, h), (y, k)]=[x, y]_\frakg+\rho^L(x)k+\rho^R(y)h+[h, k]_\frakh,
  \end{eqnarray*}
for $(x, h),(y, k) \in \frakg \oplus\frakh$. We denote this semi-direct product algebra by $\frakg \ltimes \frakh$.
Moreover, it follows from Lemma \ref{Lem: new-representation} that the direct sum $\frakg \ltimes \frakh$ carries another
semi-direct product Leibniz algebra structure given by
  \begin{eqnarray*}
[(x, h), (y, k)]_H=[x, y]_\frakg+\rho^L_H(x)k+\rho^R_H(y)h+[h, k]_\frakh.
  \end{eqnarray*}
We denote this semi-direct product algebra by $\frakg \ltimes_H \frakh$.

\begin{thm}
Let $(\frakg, [\cdot, \cdot]_\frakg)$ and $(\frakh, [\cdot, \cdot]_\frakh)$ be Leibniz algebras  with
respect to the Leibniz $\frakg$-representation $(\frakh, \rho^L, \rho^R)$ and  $H: \frakg \rightarrow \frakh$ be a linear map. \\
(Ca) Suppose that $(\frakh, \rho^L_H, \rho^R_H)$ is a Leibniz $\frakg$-representation given by (2). Then the linear map $\hat{H} : \frakg \ltimes_H \frakh\rightarrow
\frakg \ltimes \frakh$ defined by
\begin{eqnarray*}
\hat{H}(x, h)=(x, H(x)+h), \forall x\in \frakg, h\in \frakh,
\end{eqnarray*}
 is a Leibniz algebra isomorphism if and only if $H$ is a crossed homomorphism from $\frakg$ to $\frakh$ with
respect to the  Leibniz $\frakg$-representation $(\frakh, \rho^L, \rho^R)$.\\
(Cb) $H$ is a crossed homomorphism from $\frakg$ to $\frakh$ with
respect to the  Leibniz $\frakg$-representation $(\frakh, \rho^L, \rho^R)$ if and only if  the
map $\iota_H : \frakg \rightarrow \frakg \ltimes_H \frakh$  defined by
\begin{eqnarray*}
\iota_H(x)=(x, H(x)), \forall x\in \frakg
\end{eqnarray*}
is  a Leibniz algebra homomorphism.
\end{thm}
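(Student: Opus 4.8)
The plan is to prove both statements (Ca) and (Cb) by a direct comparison of brackets; no tools are needed beyond unfolding the two semidirect‑product brackets and the formulas for $\rho_H^L,\rho_H^R$ in~\eqref{new-equations}. The one observation that makes (Ca) clean is that $\hat H$ is a linear isomorphism irrespective of any hypothesis on $H$: the map $(x,h)\mapsto(x,h-H(x))$ is a two‑sided linear inverse.

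For \textbf{(Ca)}, since $\hat H$ is already a linear bijection and a bijective morphism of Leibniz algebras automatically has a morphism inverse, it suffices to check when $\hat H$ is a morphism $\frakg\ltimes_H\frakh\to\frakg\ltimes\frakh$. (The hypothesis that $(\frakh,\rho_H^L,\rho_H^R)$ is a Leibniz $\frakg$‑representation is used only to ensure $\frakg\ltimes_H\frakh$ is a Leibniz algebra, so that the implication ``$\hat H$ iso $\Rightarrow$ $H$ crossed homomorphism'' is meaningful before $H$ is known to be one.) Concretely, I would expand
\[
\hat H\big([(x,h),(y,k)]_H\big)=\Big([x,y]_\frakg,\ H([x,y]_\frakg)+\rho_H^L(x)k+\rho_H^R(y)h+[h,k]_\frakh\Big)
\]
and
\[
[\hat H(x,h),\hat H(y,k)]=\Big([x,y]_\frakg,\ \rho^L(x)(H(y)+k)+\rho^R(y)(H(x)+h)+[H(x)+h,H(y)+k]_\frakh\Big),
\]
substituting $\rho_H^L(x)k=\rho^L(x)k+[H(x),k]_\frakh$ and $\rho_H^R(y)h=\rho^R(y)h+[h,H(y)]_\frakh$ on the first line and expanding the second by bilinearity. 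Every term involving $h$ or $k$ cancels between the two sides, and the residual identity is exactly $H([x,y]_\frakg)=\rho^L(x)H(y)+\rho^R(y)H(x)+[H(x),H(y)]_\frakh$, i.e.\ the defining equation of a crossed homomorphism. Thus $\hat H$ is a morphism (equivalently, an isomorphism) if and only if $H$ is a crossed homomorphism.

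For \textbf{(Cb)}, the quickest argument notes that $\iota_H=\hat H\circ\jmath$, where $\jmath\colon\frakg\to\frakg\ltimes_H\frakh$, $\jmath(x)=(x,0)$, is always a Leibniz algebra morphism (immediate since $\rho_H^L(x)0=\rho_H^R(x)0=0=[0,0]_\frakh$) and $\hat H(x,0)=(x,H(x))$; hence $\iota_H$ in fact takes values in $\frakg\ltimes\frakh$, and by (Ca) it is a morphism once $H$ is a crossed homomorphism. For the converse, run the direct computation: $\iota_H([x,y]_\frakg)=([x,y]_\frakg,H([x,y]_\frakg))$ while the $\frakh$‑component of $[\iota_H(x),\iota_H(y)]$ reduces to $\rho^L(x)H(y)+\rho^R(y)H(x)+[H(x),H(y)]_\frakh$, so equality for all $x,y$ is again precisely the crossed‑homomorphism identity.

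I do not foresee a real obstacle: both parts are short once the brackets are written out, the only care needed being the cancellation bookkeeping in (Ca) and the remark that ``isomorphism'' there is no stronger than ``morphism'' because $\hat H$ is a priori bijective. If a more structural account is wanted, (Ca) can instead be phrased as: transporting $[\cdot,\cdot]_H$ along the linear isomorphism $\hat H$ yields the usual semidirect bracket on $\frakg\ltimes\frakh$ if and only if the crossed‑homomorphism identity holds — but the term‑by‑term expansion above is the shortest path.
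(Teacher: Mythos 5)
Your proof is correct and takes essentially the same route as the paper's: expand both semidirect-product brackets, observe that all terms involving $h,k$ cancel so that the residual identity is exactly the crossed-homomorphism equation, and obtain (Cb) by specializing $h=k=0$. Your remark that $\iota_H$ must be viewed as landing in $\frakg\ltimes\frakh$ (not $\frakg\ltimes_H\frakh$ as written in the statement) is a correct and useful catch, consistent with what the $h=k=0$ specialization of (Ca) actually yields.
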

\begin{proof}
(Ca)  Clearly $\hat{H}$ is an invertible linear map. For all $x, y \in \frakg, h, k\in  \frakh$, we have
\begin{eqnarray*}
 [\hat{H}(x, h), \hat{H}(y, k)]&=& [(x, H(x)+h), (y, H(y)+k)]\\
&=& ([x, y]_\frakg, \rho^L(x)(H(y)+k)+\rho^R(y)(H(x)+h)+[H(x)+h, H(y)+k]_\frakh)\\
&=& ([x, y]_\frakg, \rho^L(x)k+\rho^R(y)h+[H(x), k]_\frakh+[h, H(y)]_\frakh+[h, k]_\frakh\\
&& +[H(x), H(y)]_\frakg+\rho^L(x)H(y)+\rho^R(y)H(x))\\
\hat{H} [(x, h), (y, k)]_H&=& ([x, y]_\frakg, H([x, y]_\frakg)+\rho_H^L(x)k+\rho_H^R(y)h+[h, k]_\frakh)\\
&=& ([x, y]_\frakg, H([x, y]_\frakg)+\rho^L(x)k+[H(x), k]_\frakh+\rho^R(y)h+[h, H(y)]_\frakg\\
&& +[h, k]_\frakh).
\end{eqnarray*}
Thus $[\hat{H}(x, h), \hat{H}(y, k)]=\hat{H} [(x, h), (y, k)]_H$ if and only if (\ref{Def: crossed homomorphisms}) holds for $H$, which is equivalent
to that $H$ is a crossed homomorphism from from $\frakg$ to $\frakh$ with
respect to the  Leibniz $\frakg$-representation $(\frakh, \rho^L, \rho^R)$.\\
(Cb) follows from the proof of (Ca) by taking $h = k = 0$.
\end{proof}
\bigskip

\section{Cohomology theory of crossed homomorphisms on Leibniz algebras} \label{Sect: Cohomology theory of Rota-Baxter 3-Lie algebras}
In this  section,  we consider a differential graded Lie algebra (dgLa) whose Maurer-Cartan elements are given by crossed homomorphisms on Leibniz algebras. This characterizations of
a crossed homomorphism  allow us to define cohomology for a crossed homomorphism.

Let $(\frakg, [\cdot, \cdot]_\frakg)$ and $(\frakh, [\cdot, \cdot]_\frakh)$ be two Leibniz algebras and  $(\frakh, \rho^L, \rho^R)$ be a Leibniz $\frakg$-representation. We denote the Leibniz products on $\frakg$ and $\frakh$ respectively by $\mu_\frakg$ and $\mu_\frakh$. Consider the semidirect product
 $\frakg\oplus \frakh$.  Note that $\mu_\frakh$ is a Maurer-Cartan element in the graded Lie algebra $\oplus_n\mathrm{Hom}((\frakg\oplus \frakh)^{\otimes n}, \frakg\oplus \frakh)$. Therefore, we can define a differential $d_{\mu_\frakh}=\llbracket\mu_\frakh, ~\rrbracket$ and the derived bracket on the graded space $\oplus_n\mathrm{Hom}(\frakg^{\otimes n},  \frakh)$ by
 \begin{eqnarray*}
\widehat{\llbracket f, g\rrbracket} := (-1)^{m}\llbracket\llbracket\mu_\frakh,f\rrbracket,g\rrbracket,
\end{eqnarray*}
for any $f\in \mathrm{Hom}(\frakg^{\otimes m},  \frakh)$ and $g\in \mathrm{Hom}(\frakg^{\otimes n},  \frakh)$.

Moreover, we know that $\mu_\frakg+\rho^L+ \rho^R$ is a Maurer-Cartan element in the graded Lie algebra $\oplus_n\mathrm{Hom}((\frakg\oplus \frakh)^{\otimes n}, \frakg\oplus \frakh)$ from \cite{ST1}, Thus it induces a differential $d_{\mu_\frakg+\rho^L+ \rho^R}=\llbracket\mu_\frakg+\rho^L+ \rho^R, ~\rrbracket$, and the graded space $\oplus_n\mathrm{Hom}(\frakg^{\otimes n},  \frakh)$ is closed under the differential $d=d_{\mu_\frakg+\rho^L+ \rho^R}$ and is given by
\begin{align*}
&(d f)(x_1,\ldots, x_{n+1})\\
&=(-1)^{n+1}\sum^{n}_{i=1}(-1)^{i+1}\rho^L(x_i)f(x_1,\ldots, \hat{x}_i, \ldots, x_{n+1}) + \rho^R(x_{n+1})f(x_1,\ldots,  x_{n})\\
&+(-1)^{n+1}\sum_{1\leq i< j\leq n+1} (-1)^{i}f(x_1,\ldots, \hat{x}_i, \ldots, x_{j-1}, [x_i, x_j]_\frakg, x_{j+1},\ldots, x_{n+1}).
\end{align*}

Finally, we have $\llbracket\mu_\frakg+\rho^L+ \rho^R, \mu_\frakh\rrbracket$=0. Hence, $(\oplus_n\mathrm{Hom}(\frakg^{\otimes n}, \frakh),\widehat{\llbracket~, ~\rrbracket}, d)$ is a dgLa.

\begin{prop}
 A linear map  $H : \frakg\rightarrow  \frakh$ is a crossed homomorphism from $\frakg$ to $\frakh$ if
and only if $H \in C^1(\frakg, \frakh)$ is a Maurer-Cartan element in the dgLa $(\oplus_n\mathrm{Hom}(\frakg^{\otimes n}, \frakh),\widehat{\llbracket~, ~\rrbracket}, d)$.
\end{prop}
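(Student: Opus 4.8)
The plan is to unwind both sides of the Maurer--Cartan equation in the dgLa $(\oplus_n\mathrm{Hom}(\frakg^{\otimes n}, \frakh),\widehat{\llbracket~, ~\rrbracket}, d)$ for an element $H \in C^1(\frakg,\frakh)$ and check term-by-term that it coincides with the defining identity \eqref{Def: crossed homomorphisms} of a crossed homomorphism. Recall that the Maurer--Cartan equation for a degree-$1$ element in a dgLa is $dH + \tfrac12\widehat{\llbracket H, H\rrbracket} = 0$. So the first step is to compute $dH$ using the explicit formula for the differential $d = d_{\mu_\frakg + \rho^L + \rho^R}$ given just above the statement, applied with $n=1$; this produces $(dH)(x,y) = \rho^L(x)H(y) + \rho^R(y)H(x) - H([x,y]_\frakg)$ up to the global sign $(-1)^{n+1}=(-1)^2=1$, so the signs come out clean.

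The second step is to compute $\widehat{\llbracket H, H\rrbracket}$. By definition $\widehat{\llbracket H, H\rrbracket} = (-1)^{1}\llbracket\llbracket\mu_\frakh, H\rrbracket, H\rrbracket = -\llbracket\llbracket\mu_\frakh, H\rrbracket, H\rrbracket$, where $\llbracket-,-\rrbracket$ is the Balavoine bracket on $C^\ast(\frakg\oplus\frakh, \frakg\oplus\frakh)$. Here I would use that $\mu_\frakh$ is supported purely on the $\frakh$-factor and that $H$ maps $\frakg$ to $\frakh$, so the only surviving compositions in $\llbracket\mu_\frakh, H\rrbracket$ and then in the outer bracket with $H$ are those that feed $H(x), H(y)$ into the two slots of $\mu_\frakh$. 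Tracking the combinatorics of $\mathbb{S}_{(k-1,q)}$-shuffles and the sign conventions in the definition of $\overline{\diamond}$ and $\diamond_{\mathrm{\bfk}}$, this evaluates on $(x,y) \in \frakg^{\otimes 2}$ to (a scalar multiple of) $[H(x), H(y)]_\frakh$; matching the normalization so that $\tfrac12\widehat{\llbracket H,H\rrbracket}(x,y) = [H(x),H(y)]_\frakh$ is the bookkeeping that needs care. Combining with step one, $dH + \tfrac12\widehat{\llbracket H,H\rrbracket} = 0$ becomes exactly $H([x,y]_\frakg) = \rho^L(x)H(y) + \rho^R(y)H(x) + [H(x),H(y)]_\frakh$, which is \eqref{Def: crossed homomorphisms}.

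The main obstacle is purely a sign-and-shuffle bookkeeping issue in step two: one must verify that the derived-bracket construction $\widehat{\llbracket-,-\rrbracket}$ together with the degree shift (so that $H$ sitting in $\mathrm{Hom}(\frakg,\frakh)$ has the correct internal degree to be a Maurer--Cartan element) produces the factor $[H(x),H(y)]_\frakh$ with coefficient $+1$ after the $\tfrac12$, rather than with an unwanted sign or a factor of $2$. A clean way to organize this is to note that $\frakg\ltimes_H\frakh$ (equivalently the image of $\iota_H$ from the previous theorem) being a Leibniz subalgebra is the conceptual content, and then observe that $\mu_\frakg + \rho^L + \rho^R + \mu_\frakh + (\text{graph term of }H)$ is a Maurer--Cartan element of the full Balavoine dgLa on $\frakg\oplus\frakh$ precisely when $H$ is a crossed homomorphism; restricting this to the derived dgLa on $\oplus_n\mathrm{Hom}(\frakg^{\otimes n},\frakh)$ and using $\llbracket\mu_\frakg+\rho^L+\rho^R, \mu_\frakh\rrbracket = 0$ (already recorded above) isolates the equation $dH + \tfrac12\widehat{\llbracket H,H\rrbracket}=0$. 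This reduces the sign check to the standard fact about derived brackets, which I would cite or verify in a short lemma rather than expanding every shuffle by hand.
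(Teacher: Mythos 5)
Your proposal is correct and follows essentially the same route as the paper: the paper's proof is exactly the direct computation $(dH+\tfrac{1}{2}\widehat{\llbracket H, H\rrbracket})(x, y)=\rho^L(x)H(y)+\rho^R(y)H(x)-H([x, y]_\frakg)+[H(x), H(y)]_\frakh$, stated without expanding the shuffle bookkeeping you flag as the main obstacle. Your sign analysis ($(-1)^{n+1}=1$ for $n=1$ in $dH$, and the normalization of $\tfrac12\widehat{\llbracket H,H\rrbracket}$) matches the identity the paper asserts, so nothing is missing relative to the published argument.
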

\begin{proof}
For any linear map $H: \frakg\rightarrow \frakh$ and $x, y\in \frakg$, we have
\begin{eqnarray*}
(dH+\frac{1}{2}\widehat{\llbracket H, H\rrbracket})(x, y)=\rho^L(x)H(y)+\rho^R(y)H(x)-H([x, y]_\frakg)+[H(x), H(y)]_\frakh.
\end{eqnarray*}
Hence $H$ is a crossed homomorphism if and only if $H$ is a Maurer-Cartan element.
\end{proof}
It follows from the above proposition that a crossed homomorphism $H$ induces a differential $d_H = d + \widehat{\llbracket H, ~\rrbracket}$
 on the graded Lie algebra $(\oplus_n\mathrm{Hom}(\frakg^{\otimes n}, \frakh),\widehat{\llbracket~, ~\rrbracket})$. Define $C^n(\frakg, \frakh)=\mathrm{Hom}(\frakg^{\otimes n}, \frakh)$
and $C^{\ast}(\frakg, \frakh)=\oplus_n C^n(\frakg, \frakh)$. Thus a
crossed homomorphism induces a dgLa $(C^{\ast}(\frakg, \frakh), \widehat{\llbracket~, ~\rrbracket}, d_H)$.

\begin{thm}
Let $(\frakg, [\cdot, \cdot]_\frakg)$ and $(\frakh, [\cdot, \cdot]_\frakh)$ be two Leibniz algebras and  $(\frakh, \rho^L, \rho^R)$ be a Leibniz $\frakg$-representation. Suppose $H$ is a crossed homomorphism. Then for any linear map $H': \frakg\rightarrow \frakh$, the sum $H+H'$ is also a crossed homomorphism if and only if $H'$ is a  Maurer-Cartan element in the dgLa $(C^{\ast}(\frakg, \frakh), \widehat{\llbracket~, ~\rrbracket}, d_H)$.

\end{thm}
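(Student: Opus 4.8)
The plan is to reduce the statement to the standard Maurer–Cartan calculus in the dgLa $(C^{\ast}(\frakg,\frakh),\widehat{\llbracket~,~\rrbracket},d)$ together with the twisting procedure. Since $H$ is a crossed homomorphism, by the preceding proposition it is a Maurer–Cartan element, i.e.\ $dH+\frac12\widehat{\llbracket H,H\rrbracket}=0$. The key fact I would invoke is the general principle that if $\alpha$ is a Maurer–Cartan element of a dgLa $(L,\partial,[~,~])$, then $\alpha+\beta$ is again a Maurer–Cartan element if and only if $\beta$ is a Maurer–Cartan element of the twisted dgLa $(L,\partial_\alpha,[~,~])$ with $\partial_\alpha=\partial+[\alpha,~]$. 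I would apply this with $L=\oplus_n C^n(\frakg,\frakh)$, $\alpha=H$, $\beta=H'$, $\partial=d$, and $\partial_H=d_H=d+\widehat{\llbracket H,~\rrbracket}$, which is exactly the differential defined in the discussion following the proposition.

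Concretely, the first step is to expand the Maurer–Cartan equation for $H+H'$:
\begin{align*}
d(H+H')+\tfrac12\widehat{\llbracket H+H',H+H'\rrbracket}
&= dH+\tfrac12\widehat{\llbracket H,H\rrbracket}
 + dH'+\widehat{\llbracket H,H'\rrbracket}+\tfrac12\widehat{\llbracket H',H'\rrbracket},
\end{align*}
where I have used the symmetry of $\widehat{\llbracket~,~\rrbracket}$ on $C^1$ (both arguments lie in $\mathrm{Hom}(\frakg,\frakh)$, so the sign $(-1)^{pq}$ in the Balavoine/derived bracket is $+1$) to combine the two cross terms. The second step is to observe that the first summand vanishes because $H$ is a Maurer–Cartan element, and that $dH'+\widehat{\llbracket H,H'\rrbracket}=d_HH'$ by the very definition of the twisted differential $d_H$. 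Hence the displayed expression reduces to $d_HH'+\tfrac12\widehat{\llbracket H',H'\rrbracket}$, so $H+H'$ is a crossed homomorphism (equivalently, a Maurer–Cartan element of the original dgLa, by the proposition) precisely when $d_HH'+\tfrac12\widehat{\llbracket H',H'\rrbracket}=0$, i.e.\ when $H'$ is a Maurer–Cartan element of $(C^{\ast}(\frakg,\frakh),\widehat{\llbracket~,~\rrbracket},d_H)$.

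There is one point that deserves care, and it is really the only nontrivial step: one must verify that $d_H$ squares to zero (so that $(C^{\ast}(\frakg,\frakh),\widehat{\llbracket~,~\rrbracket},d_H)$ is genuinely a dgLa and the notion ``Maurer–Cartan element of it'' makes sense). This follows from the graded Jacobi identity for $\widehat{\llbracket~,~\rrbracket}$ together with the Maurer–Cartan equation $dH+\tfrac12\widehat{\llbracket H,H\rrbracket}=0$: for any $\xi$,
\[
d_H^2\xi = d^2\xi + \widehat{\llbracket dH,\xi\rrbracket} + \widehat{\llbracket H, d\xi\rrbracket}\pm\widehat{\llbracket d H,\xi\rrbracket}\text{-type terms} + \widehat{\llbracket H,\widehat{\llbracket H,\xi\rrbracket}\rrbracket},
\]
and using $d^2=0$, the compatibility $d\widehat{\llbracket~,~\rrbracket}=\widehat{\llbracket d~,~\rrbracket}\pm\widehat{\llbracket~,d~\rrbracket}$, the Jacobi identity $\widehat{\llbracket H,\widehat{\llbracket H,\xi\rrbracket}\rrbracket}=\tfrac12\widehat{\llbracket\widehat{\llbracket H,H\rrbracket},\xi\rrbracket}$, and $\widehat{\llbracket dH,\xi\rrbracket}=-\tfrac12\widehat{\llbracket\widehat{\llbracket H,H\rrbracket},\xi\rrbracket}$ (from the MC equation), everything cancels. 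I expect the main obstacle is purely bookkeeping with the signs coming from the derived bracket $\widehat{\llbracket f,g\rrbracket}=(-1)^m\llbracket\llbracket\mu_\frakh,f\rrbracket,g\rrbracket$ and the degree shift implicit in $C^n(\frakg,\frakh)=\mathrm{Hom}(\frakg^{\otimes n},\frakh)$ sitting in degree $n-1$; once the degree conventions are fixed consistently with the preceding proposition, the computation is the standard twisting argument and no new idea is needed.
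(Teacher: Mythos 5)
Your argument is exactly the paper's proof: expand $d(H+H')+\tfrac12\widehat{\llbracket H+H',H+H'\rrbracket}$, cancel $dH+\tfrac12\widehat{\llbracket H,H\rrbracket}$ using that $H$ is a Maurer--Cartan element, combine the two cross terms, and recognize $d_HH'+\tfrac12\widehat{\llbracket H',H'\rrbracket}$. Your additional check that $d_H^2=0$ is a sensible piece of extra care (the paper simply asserts that $d_H$ is a differential after the preceding proposition), but the route is the same.
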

\begin{proof}
\begin{eqnarray*}
&&(d(H+H')+\frac{1}{2}\widehat{\llbracket H+H', H+H'\rrbracket})\\
&&= dH+dH'+\frac{1}{2}(\widehat{\llbracket H, H\rrbracket}+\widehat{\llbracket H, H'\rrbracket}+\widehat{\llbracket H', H\rrbracket}+\widehat{\llbracket H', H'\rrbracket})\\
&&= dH'+\widehat{\llbracket H, H'\rrbracket}+\frac{1}{2}\widehat{\llbracket H', H'\rrbracket}\\
&&= d_H(H')+\frac{1}{2}\widehat{\llbracket H', H'\rrbracket}.
\end{eqnarray*}
And the proof is finished.
\end{proof}

The cohomology of the cochain complex $(C^{\ast}(\frakg, \frakh), d_{H})$ is called the cohomology of the crossed homomorphism $H$,  if $\mathcal{Z}_H^k(\frakg, \frakh) =\{ f \in C^k(\frakg, \frakh)| d_H(f) = 0\}$ is the space of $k$-cocycles and $\mathcal{B}^k_H(\frakg, \frakh) =\{d_H (f)\in  C^k(\frakg, \frakh)| f \in C^{k-1}(\frakg, \frakh)\}$ is the space of $k$-coboundaries then $\mathcal{B}^k_H(\frakg, \frakh) \subset \mathcal{Z}^k_H(\frakg, \frakh)$, for $k \geq 0$. The corresponding cohomology groups
\begin{eqnarray*}
\mathcal{H}^k_H(\frakg, \frakh)=\frac{\mathcal{Z}^k_H(\frakg, \frakh)}{\mathcal{B}^k_H(\frakg, \frakh)}, k \geq 0.
\end{eqnarray*}
We denote the corresponding cohomology groups simply by $\mathcal{H}^{\ast}(\frakg, \frakh)$.

First recall from Lemma (\ref{new-equations}) that a crossed homomorphism $H$ induces a   Leibniz $\frakg$-representation given by
\begin{eqnarray*} \label{new-equations}
&&\rho^L_H(x)h:=\rho^L(x)h+[H(x), h]_\frakh,~~~~~~~~ \rho^R_H(x)h:=\rho^R(x)h+[h, H(x)]_\frakh.
\end{eqnarray*}
The corresponding  cochain groups are given by $C^n_{\mathrm{Leib}}(\frakg, \frakh) =\mathrm{Hom}(\frakg^{\otimes n}, \frakh)$, for $n \geq 0$, and the coboundary operator $\delta_{\mathrm{Leib}}: C^n_{\mathrm{Leib}}(\frakg, \frakh)\rightarrow C^{n+1}_{\mathrm{Leib}}(\frakg, \frakh)$ is given by
\begin{eqnarray*}
&&(\delta_{\mathrm{Leib}} f)(x_1,\ldots, x_{n+1})\\
&=&\sum^{n}_{i=1}(-1)^{i+1}\rho^L(x_i)f(x_1,\ldots, \hat{x}_i, \ldots, x_{n+1})+[H(x_i), f(x_1,\ldots, \hat{x}_i, \ldots, x_{n+1})]_\frakh \\
&& + (-1)^{n+1}\rho^R(x_{n+1})f(x_1,\ldots,  x_{n})+(-1)^{n+1}[f(x_1,\ldots,  x_{n}), H(x)]_\frakh\\
&&+\sum_{1\leq i< j\leq n+1} (-1)^{i}f(x_1,\ldots, \hat{x}_i, \ldots, x_{j-1}, [x_i, x_j]_\frakg, x_{j+1},\ldots, x_{n+1}),
\end{eqnarray*}
for $x_1, \ldots, x_{n+1}\in \frakg$.

\begin{prop}
The coboundary operators $d_H$ and $\delta_{\mathrm{Leib}}$ are related by
\begin{eqnarray*}
d_H(f)=(-1)^{n-1}\delta_{\mathrm{Leib}}(f), \forall f\in C^n(\frakg, \frakh).
\end{eqnarray*}
\begin{proof}
For any $f\in C^n(\frakg, \frakh)$, we have
\begin{eqnarray*}
&&(-1)^{n-1}(d_H(f))(x_1,\ldots, x_{n+1})\\
&=& (-1)^{n-1}(df+\widehat{\llbracket H, f\rrbracket})(x_1,\ldots, x_{n+1})\\
&=& \sum^{n}_{i=1}(-1)^{i+1}\rho^L(x_i)f(x_1,\ldots, \hat{x}_i, \ldots, x_{n+1}) + (-1)^{n+1}\rho^R(x_{n+1})f(x_1,\ldots,  x_{n})\\
&&+\sum_{1\leq i< j\leq n+1} (-1)^{i}f(x_1,\ldots, \hat{x}_i, \ldots, x_{j-1}, [x_i, x_j]_\frakg, x_{j+1},\ldots, x_{n+1})\\
&& +\{[H(x_i), f(x_1,\ldots, \hat{x}_i, \ldots, x_{n+1})]_\frakh+ (-1)^{n+1}[f(x_1,\ldots,  x_{n}), H(x)]_\frakh\}\\
&=& \delta_{\mathrm{Leib}}(f).
\end{eqnarray*}
And the proof is finished.
\end{proof}
\end{prop}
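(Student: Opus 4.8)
The plan is to expand both sides through their explicit cochain formulas and match them term by term. Writing $d_H = d + \widehat{\llbracket H, -\rrbracket}$, for $f \in C^n(\frakg,\frakh)$ and $x_1,\ldots,x_{n+1}\in\frakg$ it suffices to verify that $(-1)^{n-1}\big((df)(x_1,\ldots,x_{n+1}) + \widehat{\llbracket H, f\rrbracket}(x_1,\ldots,x_{n+1})\big) = (\delta_{\mathrm{Leib}}f)(x_1,\ldots,x_{n+1})$. The key observation is that $\delta_{\mathrm{Leib}}$ is nothing but the Loday--Pirashvili differential for the representation $(\rho^L_H,\rho^R_H)$, so inserting the decompositions $\rho^L_H(x)h = \rho^L(x)h + [H(x),h]_\frakh$ and $\rho^R_H(x)h = \rho^R(x)h + [h,H(x)]_\frakh$ splits $\delta_{\mathrm{Leib}}f$ into an \emph{untwisted summand} (the terms with $\rho^L,\rho^R$ together with the inner-bracket term $f(\ldots,[x_i,x_j]_\frakg,\ldots)$) and an \emph{$H$-summand} (the terms $\sum_i(-1)^{i+1}[H(x_i),f(x_1,\ldots,\hat{x}_i,\ldots,x_{n+1})]_\frakh$ and $(-1)^{n+1}[f(x_1,\ldots,x_n),H(x_{n+1})]_\frakh$). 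Thus it is enough to check $(-1)^{n-1}df = (\text{untwisted summand})$ and $(-1)^{n-1}\widehat{\llbracket H, f\rrbracket} = (H\text{-summand})$ separately.

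The first matching is routine bookkeeping: using the explicit formula for $d = d_{\mu_\frakg+\rho^L+\rho^R}$ recorded above together with $(-1)^{n-1} = (-1)^{n+1}$ and $\big((-1)^{n+1}\big)^2 = 1$, all global sign prefactors cancel and $(-1)^{n-1}df$ is, term by term, exactly the untwisted summand of $\delta_{\mathrm{Leib}}$.

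The substantive step is the second matching. I would unwind the derived bracket: by definition $\widehat{\llbracket H, f\rrbracket}$ is, up to the sign dictated by the degree convention, the iterated Balavoine bracket $\llbracket\llbracket\mu_\frakh, H\rrbracket, f\rrbracket$. Under the standard identification of $C^\bullet(\frakg,\frakh)$ with the subspace of $C^\bullet(\frakg\oplus\frakh,\frakg\oplus\frakh)$ consisting of maps landing in $\frakh$ and vanishing whenever an input lies in $\frakh$, the multiplication $\mu_\frakh$ only detects $\frakh$-slots while $H$ and $f$ only produce $\frakh$-outputs. This forces most contributions to vanish: $H\overline{\diamond}\mu_\frakh = 0$, so $\Phi := \llbracket\mu_\frakh, H\rrbracket = \mu_\frakh\overline{\diamond}H$ collapses to the explicit $2$-cochain $\Phi(x,y) = [H(x),y]_\frakh + [x,H(y)]_\frakh$ (the surviving terms being precisely those where $H$ is applied to the relevant $\frakg$-slot), and then $f\overline{\diamond}\Phi = 0$ because $\Phi$ applied to two genuine $\frakg$-arguments is zero. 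Hence only $\Phi\overline{\diamond}f = \Phi\diamond_1 f + (-1)^{n-1}\Phi\diamond_2 f$ contributes: the $\diamond_1$-term inserts $f(x_1,\ldots,x_n)$ into the first slot of $\Phi$ and pairs it with $x_{n+1}$, giving $[f(x_1,\ldots,x_n),H(x_{n+1})]_\frakh$, while the $\diamond_2$-term, summed over the $(1,n-1)$-shuffles of $\{1,\ldots,n\}$, gives $\sum_{i=1}^n(-1)^{i-1}[H(x_i),f(x_1,\ldots,\hat{x}_i,\ldots,x_{n+1})]_\frakh$. Multiplying by $(-1)^{n-1}$ and collecting signs then reproduces the $H$-summand of $\delta_{\mathrm{Leib}}$; adding the two matchings yields $(-1)^{n-1}d_H(f) = \delta_{\mathrm{Leib}}(f)$, equivalently $d_H(f) = (-1)^{n-1}\delta_{\mathrm{Leib}}(f)$.

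The only real obstacle I anticipate is the sign bookkeeping. One must fix, once and for all, the degree shift on $C^\bullet(\frakg\oplus\frakh,\frakg\oplus\frakh)$ that makes $\mu_\frakh$ and $\mu_\frakg+\rho^L+\rho^R$ homogeneous of degree $1$, interpret the exponent in $\widehat{\llbracket f,g\rrbracket} = (-1)^{m}\llbracket\llbracket\mu_\frakh, f\rrbracket, g\rrbracket$ accordingly, and carry the shuffle signs $(-1)^\sigma$ and the prefactors $(-1)^{(k-1)q}$ in $\overline{\diamond}$ correctly through the two nested brackets. Since the Loday--Pirashvili complex has no antisymmetrization, these shuffle signs genuinely matter and cannot be suppressed as in the Chevalley--Eilenberg case. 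Once the conventions are pinned down so that the (forced) untwisted matching holds, the $H$-matching reduces to the mechanical shuffle computation above, with every remaining term vanishing for the structural reason that $\mu_\frakh$ requires two $\frakh$-inputs.
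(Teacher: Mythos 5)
Your proposal is correct and follows essentially the same route as the paper: write $d_H f=(d+\widehat{\llbracket H,-\rrbracket})f$, match $(-1)^{n-1}df$ with the untwisted part of $\delta_{\mathrm{Leib}}$, and match $(-1)^{n-1}\widehat{\llbracket H,f\rrbracket}$ with the $H$-twisted terms. In fact you are more careful than the paper, which merely writes out the two expansions and asserts equality (and contains typos: a missing $\sum_i(-1)^{i+1}$ and $H(x)$ in place of $H(x_{n+1})$), whereas you actually unwind the derived bracket $\llbracket\llbracket\mu_\frakh,H\rrbracket,f\rrbracket$ and explain why the other contributions vanish.
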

\bigskip

%It is ready to see that up to signs, the complex $(\C^\bullet_{\RB\frakg}(\frakg,M), d^\bullet)$ is exactly  the shift of the mapping cone of $\Phi^\bullet$.

\section{Deformations of crossed homomorphisms on Leibniz algebras}

In this section, we study deformations of a crossed homomorphism on Leibniz algebras.

\subsection{Linear deformations}

Let $H: \frakg\rightarrow \frakh$  be a crossed homomorphism. A linear deformation of $H$ consists
of a sum $H_t = H + tH_1$ such that $H_t$
is a crossed homomorphism, for all values
of $t$. In such a case, we say that $H_1$ generates a one-parameter linear deformation
of $H$.
Thus, if $H_1$ generates a linear deformation of $H$ then $H_t = H + tH_1$
satisfies
\begin{eqnarray*}
H_t([x, y]_\frakg)=\rho^L(x)H_t(y)+\rho^L(y)H_t(x)+[H_t(x), H_t(y)]_\frakg, \forall x, y\in \frakg.
\end{eqnarray*}
That is
\begin{eqnarray}
&& H_1([x, y]_\frakg) = \rho^L(x)H_1(y)+\rho^L(y)H_1(x)+[H_1(x), H(y)]_\frakg+[H(x), H_1(y)]_\frakg,\\
&& [H_1(x), H_1(y)]_\frakg = 0.
\end{eqnarray}

It  follows from (3) that $H_1$ is a 1-cocycle in the cohomology of the crossed homomorphism $H$.

\begin{defn}
Two linear deformations $H_t = H + tH_1$ and $H'_t = H + tH'_1$
are said to be equivalent if there exists $x\in \frakg$ such that
\begin{eqnarray*}
&& \phi_t = \mathrm{id}_\frakg + tL_x,  \quad \psi_t=\mathrm{id}_\frakh + t\rho^{L}(x)
\end{eqnarray*}
is a homomorphism  from $H_t$ to $H'_t$.
\end{defn}

Thus, if  $H_t$ and $H'_t $ are equivalent linear deformations, then  the following conditions hold:
\begin{eqnarray*}
&&(i)~~ [\phi_t(y), \phi_t(z)]_\frakg=\phi_t([y, z]_\frakg),~~~[\psi_t(h), \psi_t(k)]_\frakh=\psi_t([h, k]_\frakh),\\
&&(ii)~~\phi_t (\rho^{L}(y)h)=\rho^{L}(\phi_t(y))\psi_t(h),\\
&&(iii)~~\phi_t (\rho^{R}(y)h) =\rho^{R}(\phi_t(y))\psi_t(h),\\
&&(iv)~~H'_t\circ \phi_t (y)= \psi_t \circ H_t (y),
\end{eqnarray*}
for all $y, z\in \frakg$ and $h, k \in \frakh$.

Note that (i) is equivalent
\begin{eqnarray}
[[x, y]_\frakg, [x, z]_\frakg]_\frakg = 0,~~~~~  [\rho^{L}(x)h, \rho^{L}(x)k]_\frakh = 0.
\end{eqnarray}

Further, (ii)and (iii) imply that
\begin{eqnarray}
&& \rho^L([x, y]_\frakg)\rho^L(x)=0, \\
&&\rho^R([x, y]_\frakg)\rho^L(x)=0, ~~~~\forall y\in \frakg.
\end{eqnarray}

Finally, the condition (iv) is equivalent to
\begin{eqnarray}
&& H_1(y)-H'_1(y)=\rho^{R}(y)H(x)+[H(x), H(y)]_\frakh,\\
&& \rho^{L}(x)H_1(y)=H_1'([x, y]_\frakg).
\end{eqnarray}

It follows from (8) that $H_1(y)-H'_1(y)=\delta_{\mathrm{Leib}}(-H(x))y.$ Hence, we obtain the following
\begin{thm}
Let $H_t = H + tH_1$ be a linear deformation of $H$. Then the linear term
$H_1$ is a 1-cocycle in the cohomology of $H$. Its cohomology class depends only on the
equivalence class of $H_t$.
\end{thm}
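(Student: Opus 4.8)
The plan is to prove the two assertions of the theorem separately, both by direct unwinding of the already-established identities. For the first assertion, I observe that equation (3) in the excerpt is exactly the statement that $H_1$ lies in the kernel of the deformed coboundary operator $d_H$ in degree $1$. Concretely, from the Proposition relating $d_H$ and $\delta_{\mathrm{Leib}}$ we have $d_H(H_1) = (-1)^{1-1}\delta_{\mathrm{Leib}}(H_1) = \delta_{\mathrm{Leib}}(H_1)$, and I would expand $(\delta_{\mathrm{Leib}} H_1)(x,y)$ using the explicit formula for $\delta_{\mathrm{Leib}}$ with $n=1$: it is $\rho^L(x)H_1(y) + [H(x),H_1(y)]_\frakh - \rho^R(y)H_1(x) - \cdots$ — one simply checks that the vanishing of this expression is precisely equation (3). (Here one should be careful that the paper's display (3) is written with $\rho^L$ in both slots where $\frakg$ acts, which matches the convention used in the linear-deformation equations; I would keep the same convention throughout so the bookkeeping of signs stays consistent.) Thus $H_1 \in \mathcal{Z}^1_H(\frakg,\frakh)$, which is the first claim.

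For the second assertion, suppose $H_t = H + tH_1$ and $H'_t = H + tH'_1$ are equivalent linear deformations, witnessed by $\phi_t = \mathrm{id}_\frakg + tL_x$ and $\psi_t = \mathrm{id}_\frakh + t\rho^L(x)$ for some $x \in \frakg$. The key input is condition (iv), i.e. $H'_t \circ \phi_t = \psi_t \circ H_t$, whose degree-one ($t^1$) part is equation (8): $H_1(y) - H'_1(y) = \rho^R(y)H(x) + [H(x),H(y)]_\frakh$. I would then recognize the right-hand side as a coboundary: applying $\delta_{\mathrm{Leib}}$ in degree $1$ to the $0$-cochain $-H(x) \in C^0_{\mathrm{Leib}}(\frakg,\frakh) = \frakh$, and recalling that $\delta_{\mathrm{Leib}}$ in degree $0$ sends $v \in \frakh$ to the map $y \mapsto -\rho^R(y)v - [v,H(y)]_\frakh$ (the degree-$0$ piece of the $\delta_{\mathrm{Leib}}$ formula, which has only the $\rho^R$ term and its $H$-twist since there is no $\rho^L$ summand when $n=0$), I get $\delta_{\mathrm{Leib}}(-H(x))(y) = \rho^R(y)H(x) + [H(x),H(y)]_\frakh$. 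Hence $H_1 - H'_1 = \delta_{\mathrm{Leib}}(-H(x)) = d_H(-H(x))$ is a coboundary, so $H_1$ and $H'_1$ represent the same class in $\mathcal{H}^1_H(\frakg,\frakh)$. Combined with the first part (which also shows $H'_1$ is a cocycle, so the class of $H'_1$ makes sense), this proves the class of $H_1$ depends only on the equivalence class of $H_t$.

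The main obstacle I anticipate is purely notational rather than conceptual: getting all the signs and the $\rho^L$-versus-$\rho^R$ placements to line up between three different presentations of the same complex — the derived-bracket coboundary $d_H$, the Loday--Pirashvili-style $\delta_{\mathrm{Leib}}$, and the ad hoc expansions (3)--(9) obtained by differentiating the deformation equations in $t$. In particular I would double-check the degree-$0$ formula for $\delta_{\mathrm{Leib}}$, since it is not written out explicitly in the excerpt and must be read off from the general-$n$ formula by setting $n=0$ (only the $\rho^R$/right-multiplication terms survive); the identification of the right-hand side of (8) with $\delta_{\mathrm{Leib}}(-H(x))$ rests entirely on that. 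Once the conventions are fixed consistently, both parts are immediate: the first is a restatement of (3) as a cocycle condition, and the second is the observation that (8) exhibits $H_1 - H'_1$ as an explicit coboundary.
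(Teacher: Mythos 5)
Your proposal follows exactly the paper's own (implicit) argument: the first assertion is read off from equation (3) as the cocycle condition $d_H(H_1)=0$, and the second from equation (8) by identifying $H_1 - H'_1$ with the coboundary $\delta_{\mathrm{Leib}}(-H(x))$ of the $0$-cochain $-H(x)$. Your added care about the $\rho^L$-versus-$\rho^R$ typos in display (3) and about reading off the degree-$0$ formula for $\delta_{\mathrm{Leib}}$ is warranted but does not change the route.
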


\begin{defn}
A linear deformation $H_t$ of a crossed homomorphism $H$ is said to
be trivial if $H_t$ is equivalent to $H'_t= H$.
\end{defn}

\begin{defn}
Let $H$ be a crossed homomorphism from $\frakg$ to $\frakh$. An element
$x\in \frakg$ is called a Nijenhuis element associated to $H$ if $x$ satisfies (5), (6), (7) and
\begin{eqnarray*}
[x, \rho^{R}(y)H(x)+[H(x), H(y)]_\frakh]_\frakg=0,~~~~\forall y\in \frakg.
\end{eqnarray*}
\end{defn}
Denote by $Nij(H)$ the set of Nijenhuis elements associated to $H$. Then we have

\begin{thm}
Let $H$ be a crossed homomorphism from $\frakg$ to $\frakh$.. Then for any $x\in  Nij(H)$,
the sum $H_t = H + tH_1$ with $H_1 = -\delta_{\mathrm{Leib}}(H(x))$ is a trivial deformation of $H$.
\end{thm}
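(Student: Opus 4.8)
The plan is to show that the element $x \in Nij(H)$ gives rise to an equivalence between $H_t = H + tH_1$ (with $H_1 = -\delta_{\mathrm{Leib}}(H(x))$) and the constant deformation $H'_t = H$, witnessed by the pair $(\phi_t, \psi_t) = (\mathrm{id}_\frakg + tL_x,\ \mathrm{id}_\frakh + t\rho^L(x))$. By definition of trivial deformation and equivalence, this amounts to verifying that this pair is a morphism from $H_t$ to $H$, i.e.\ the four conditions (i)--(iv) listed before Theorem 4.2 hold, and also checking along the way that $H_t$ really is a crossed homomorphism (i.e.\ that conditions (3) and (4) hold for $H_1$). Since $x$ is a Nijenhuis element, conditions (5), (6), (7) are available by hypothesis, and these are exactly what is needed to force (i), (ii) and (iii): indeed (i) is equivalent to (5), while the homomorphism properties (ii) and (iii) for $\phi_t, \psi_t$, after expanding both sides as polynomials in $t$ and collecting coefficients, reduce precisely to (6) and (7) together with the representation axioms for $(\frakh, \rho^L, \rho^R)$ and the defining Leibniz-representation identities (La)--(Lc). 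So the first step is to record (i), (ii), (iii) as immediate consequences of the Nijenhuis conditions.

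The substantive step is condition (iv): $\psi_t \circ H_t = H \circ \phi_t$, i.e.\
\[
(\mathrm{id}_\frakh + t\rho^L(x))\bigl(H(y) + tH_1(y)\bigr) = H\bigl(y + t[x,y]_\frakg\bigr)
\]
for all $y \in \frakg$. Expanding, the constant term is the identity $H(y) = H(y)$, the $t$-coefficient reads $H_1(y) + \rho^L(x)H(y) = H([x,y]_\frakg)$, and the $t^2$-coefficient reads $\rho^L(x)H_1(y) = 0$. With $H_1(y) = -\delta_{\mathrm{Leib}}(H(x))(y)$, one computes from the explicit formula for $\delta_{\mathrm{Leib}}$ in degree $1$ (applied to the $0$-cochain $H(x) \in \frakh = C^0_{\mathrm{Leib}}(\frakg,\frakh)$) that
\[
\delta_{\mathrm{Leib}}(H(x))(y) = \rho^L(y)H(x) + [H(x),H(y)]_\frakh \quad\text{(plus the }\rho^R\text{-free terms vanishing on a }0\text{-cochain)},
\]
so that $H_1(y) = -\rho^R(y)H(x) - [H(x),H(y)]_\frakh$ up to the sign/term bookkeeping one must do carefully. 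Then the $t$-coefficient of (iv) becomes $-\rho^R(y)H(x) - [H(x),H(y)]_\frakh + \rho^L(x)H(y) = H([x,y]_\frakg)$, which is precisely the crossed-homomorphism equation \eqref{Def: crossed homomorphisms} for $H$ rearranged; hence it holds automatically. For the $t^2$-coefficient $\rho^L(x)H_1(y) = 0$: substituting $H_1(y) = -\delta_{\mathrm{Leib}}(H(x))(y)$, this is exactly $\rho^L(x)\bigl(\rho^R(y)H(x) + [H(x),H(y)]_\frakh\bigr) = 0$; using $\rho^L(x)[h,k]_\frakh = [\rho^L(x)h,k]_\frakh + [h,\rho^L(x)k]_\frakh$ (axiom (La)) and the representation identities, this reduces to the last Nijenhuis condition $[x,\ \rho^R(y)H(x) + [H(x),H(y)]_\frakh]_\frakg = 0$ combined with (6), (7) — so again it is guaranteed by $x \in Nij(H)$.

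Finally I would close the loop by observing that once $(\phi_t,\psi_t)$ is shown to intertwine $H_t$ and the crossed homomorphism $H' = H$, and $\phi_t,\psi_t$ are (for formal $t$, or for all $t$ in a field of characteristic $0$) invertible with the required algebra/representation compatibilities (i)--(iii), the image $H_t = \psi_t^{-1} \circ H \circ \phi_t$ is automatically a crossed homomorphism — this handles (3) and (4) without separate verification. The main obstacle is purely computational: organizing the expansion of (ii)--(iv) in powers of $t$ and matching each coefficient against the correct combination of the Nijenhuis equations (5)--(7), the extra condition in Definition 4.6, and the Leibniz-representation axioms (La)--(Lc); there is no conceptual difficulty, but the sign conventions in $\delta_{\mathrm{Leib}}$ on low-degree cochains must be tracked with care. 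I would present (i)--(iii) tersely and give the $t$- and $t^2$-coefficient computations of (iv) in full, since that is where the Nijenhuis hypotheses are genuinely used.
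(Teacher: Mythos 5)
The paper actually states this theorem without proof, so there is nothing to compare against verbatim; your strategy — exhibit $(\phi_t,\psi_t)=(\mathrm{id}_\frakg+tL_x,\ \mathrm{id}_\frakh+t\rho^L(x))$ as a morphism from $H_t$ to $H'_t=H$, use the Nijenhuis conditions (5)--(7) for (i)--(iii), and check (iv) coefficientwise in $t$ — is exactly the argument the surrounding text (the derivation of equations (5)--(9)) sets up, and it does work. Your reductions of (i)--(iii) to (5)--(7) and your closing remark that $H_t=\psi_t^{-1}\circ H\circ\phi_t$ is then automatically a crossed homomorphism (so (3), (4) need no separate check) are both correct.

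The one thing you must repair is the sign of $H_1$. On a $0$-cochain only the two terms with prefactor $(-1)^{n+1}=-1$ survive, so
\[
\delta_{\mathrm{Leib}}(H(x))(y)=-\rho^R(y)H(x)-[H(x),H(y)]_\frakh,
\qquad
H_1(y)=-\delta_{\mathrm{Leib}}(H(x))(y)=\rho^R(y)H(x)+[H(x),H(y)]_\frakh,
\]
the opposite of what you wrote (you also wrote $\rho^L(y)$ where it should be $\rho^R(y)$). As you literally stated it, the $t$-coefficient of (iv) would read $-\rho^R(y)H(x)-[H(x),H(y)]_\frakh+\rho^L(x)H(y)=H([x,y]_\frakg)$, which is \emph{not} the crossed-homomorphism identity and is false in general; with the corrected sign it becomes $\rho^L(x)H(y)+\rho^R(y)H(x)+[H(x),H(y)]_\frakh=H([x,y]_\frakg)$, which is identity \eqref{Def: crossed homomorphisms} on the nose (equivalently, equation (8) with $H_1'=0$). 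Likewise the $t^2$-coefficient $\rho^L(x)H_1(y)=\rho^L(x)\bigl(\rho^R(y)H(x)+[H(x),H(y)]_\frakh\bigr)=0$ is not something you need to ``reduce'' via (La) and (6), (7): it is verbatim the extra condition in the definition of a Nijenhuis element, once one reads $[x,h]_\frakg$ for $h\in\frakh$ there as $\rho^L(x)h$ in the semidirect product. So the proof is sound once the bookkeeping you deferred is actually done.
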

\bigskip
\subsection{Formal deformations}

Let $H: \frakg\rightarrow \frakh$  be a crossed homomorphism. Let $\mathbf{k}[[t]]$ be the ring of power series in one variable $t$. For any $\mathbf{k}$-linear space $\frakg$,
let $\frakg [[t]]$ denotes the vector space of formal power series in $t$ with coefficients from $\frakg$. Then  $\frakg[[t]]$ is Leibniz
algebra structure over $\mathbf{K}[[t]]$.  If $\frakh$ is a Leibniz algebra which is also a Leibniz $\frakg$-representation,  then $\frakh[[t]]$ is a Leibniz algebra over $\mathbf{k}[[t]]$ and a Leibniz $\frakg [[t]]$-representation.
\begin{defn}
 A formal one-parameter deformation of a crossed homomorphism
$H: \frakg\rightarrow \frakh$  consists of a formal sum
\begin{eqnarray}
H_t = H_0 + tH_1 + t^2H_2 + \cdots \in \mathrm{ Hom}(\frakg, \frakh)[[t]]
\end{eqnarray}
with $H_0 = H$ such that $H_t: \frakg[[t]] \rightarrow \frakh[[t]]$ is a crossed homomorphism from
$\frakg[[t]]$ to $\frakh[[t]]$.
\end{defn}
Note that (10) is  equivalent to: for each $n\geq 0$
\begin{eqnarray*}
H_n([x, y]_\frakg)=\rho^L(x)H_n(y)+\rho^R(y)H_n(x)+\sum_{i+j=n}[H_i(x), H_j(y)]_\frakh.
\end{eqnarray*}
That is
\begin{eqnarray*}
d_H(H_n)=-\frac{1}{2} \sum_{i+j=n, i, j\geq 1} \widehat{\llbracket H_i, H_j\rrbracket}, n\geq 0.
\end{eqnarray*}
The identity holds for $n = 0$ as $H$ is a crossed homomorphism. For $n = 1$, we get
$d_H(H_1) = 0$. Hence $H_1$ is a 1-cocycle in the cohomology complex of $H$. This is
called the infinitesimal of the deformation $H_t$.

\begin{defn}
Two o formal deformations $H_t$ and $H'_t$ of a crossed  homomorphism $H$
are said to be equivalent if there exists $x\in \frakg$ such that
\begin{eqnarray*}
&& \phi_t = \mathrm{id}_\frakg + tL_x,  \quad \psi_t=\mathrm{id}_\frakh + t\rho^{L}(x)
\end{eqnarray*}
is a homomorphism  from $H_t$ to $H'_t$.
\end{defn}
\begin{prop}
Let $H_t$ be a formal deformation of a crossed homomorphism $H$. Then
the linear term $H_1$ is a 1-cocycle in the cohomology of $H$. (It is called the infinitesimal
of the deformation.) Moreover, the corresponding cohomology class depends only on the
equivalence class of $H_t$.
\end{prop}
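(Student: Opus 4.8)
The plan is to prove the two assertions of the proposition in sequence, exploiting the Maurer-Cartan characterization already established. First I would verify that $H_1$ is a $1$-cocycle in the cohomology of $H$. Since $H_t = H_0 + tH_1 + t^2 H_2 + \cdots$ is a crossed homomorphism from $\frakg[[t]]$ to $\frakh[[t]]$, the Maurer-Cartan equation $d(H_t) + \tfrac{1}{2}\widehat{\llbracket H_t, H_t\rrbracket} = 0$ holds in $C^\ast(\frakg,\frakh)[[t]]$. Expanding in powers of $t$ and collecting the coefficient of $t^1$, using $H_0 = H$ and the fact that $d$ and $\widehat{\llbracket-,-\rrbracket}$ are $\bfk[[t]]$-(bi)linear, gives $d(H_1) + \tfrac{1}{2}\bigl(\widehat{\llbracket H, H_1\rrbracket} + \widehat{\llbracket H_1, H\rrbracket}\bigr) = 0$. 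By graded symmetry of the bracket (both $H$ and $H_1$ lie in $C^1(\frakg,\frakh)$, hence in degree $0$ for the shifted grading), $\widehat{\llbracket H, H_1\rrbracket} = \widehat{\llbracket H_1, H\rrbracket}$, so this reads $d(H_1) + \widehat{\llbracket H, H_1\rrbracket} = 0$, i.e. $d_H(H_1) = 0$. Thus $H_1 \in \mathcal{Z}^1_H(\frakg,\frakh)$.

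Next I would show the cohomology class $[H_1] \in \mathcal{H}^1_H(\frakg,\frakh)$ is an invariant of the equivalence class. Suppose $H_t$ and $H'_t$ are equivalent formal deformations, so there exists $x \in \frakg$ with $\phi_t = \mathrm{id}_\frakg + tL_x$ and $\psi_t = \mathrm{id}_\frakh + t\rho^L(x)$ giving a morphism from $H_t$ to $H'_t$; in particular $\psi_t \circ H_t = H'_t \circ \phi_t$. Expanding $\psi_t \circ H_t = (\mathrm{id}_\frakh + t\rho^L(x))\circ(H + tH_1 + \cdots)$ and $H'_t \circ \phi_t = (H + tH'_1 + \cdots)\circ(\mathrm{id}_\frakg + tL_x)$ and comparing coefficients of $t^1$ yields, for all $y \in \frakg$,
\begin{eqnarray*}
H_1(y) + \rho^L(x)H(y) = H'_1(y) + H([x,y]_\frakg).
\end{eqnarray*}
Therefore $H_1(y) - H'_1(y) = H([x,y]_\frakg) - \rho^L(x)H(y)$. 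Using that $H$ is a crossed homomorphism, $H([x,y]_\frakg) = \rho^L(x)H(y) + \rho^R(y)H(x) + [H(x),H(y)]_\frakh$, so the right-hand side simplifies to $\rho^R(y)H(x) + [H(x),H(y)]_\frakh$. Comparing with the explicit formula for $\delta_{\mathrm{Leib}}$ on $0$-cochains — where for $u \in \frakh = C^0_{\mathrm{Leib}}(\frakg,\frakh)$ one has $(\delta_{\mathrm{Leib}} u)(y) = \rho^R(y)u + [u, H(y)]_\frakh$ — and noting $[H(x),H(y)]_\frakh = -[H(y),H(x)]_\frakh$ is handled by the sign, we recognize $H_1 - H'_1 = \delta_{\mathrm{Leib}}(-H(x)) = \pm d_H(-H(x))$ via the relation $d_H = (-1)^{n-1}\delta_{\mathrm{Leib}}$ on $C^n$ (here $n=1$, so they agree up to sign, and on $C^0$ the sign is $(-1)^{-1}=-1$; in any case $H_1 - H'_1$ is a $d_H$-coboundary). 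Hence $[H_1] = [H'_1]$ in $\mathcal{H}^1_H(\frakg,\frakh)$.

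The routine part is the bookkeeping of signs: one must be careful that the shifted-degree conventions used to define $\widehat{\llbracket-,-\rrbracket}$ make $C^1(\frakg,\frakh)$ sit in internal degree $0$, so that the bracket is symmetric there, and that the sign in $d_H = (-1)^{n-1}\delta_{\mathrm{Leib}}$ is tracked correctly when identifying the difference $H_1 - H'_1$ as a coboundary. The only genuine subtlety — the main obstacle, such as it is — is checking that the coefficient-of-$t$ extraction in the equivalence relation is legitimate over $\bfk[[t]]$ and that the composite maps $\psi_t \circ H_t$ and $H'_t \circ \phi_t$ are being compared as $\bfk[[t]]$-linear maps $\frakg[[t]] \to \frakh[[t]]$; once that is granted, everything reduces to the linear-deformation computation already carried out in the proof preceding Theorem 4.5 (equations (8) and (9)), since the $t^1$-component of a formal deformation behaves exactly like a linear deformation. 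I would therefore present this proposition as essentially a corollary of that earlier analysis, with the one-line remark that only the degree-one part of the equivalence enters.
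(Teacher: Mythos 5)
Your proof is correct and follows essentially the same route as the paper: the cocycle condition is the $t^1$-coefficient of the crossed-homomorphism (Maurer--Cartan) equation, i.e.\ the paper's identity $d_H(H_n)=-\tfrac12\sum_{i+j=n,\,i,j\ge 1}\widehat{\llbracket H_i,H_j\rrbracket}$ at $n=1$, and the invariance of the class is exactly the linear-deformation computation yielding $H_1(y)-H_1'(y)=\rho^R(y)H(x)+[H(x),H(y)]_\frakh=\delta_{\mathrm{Leib}}(-H(x))(y)$. The only quibble is your parenthetical justification of $\widehat{\llbracket H,H_1\rrbracket}=\widehat{\llbracket H_1,H\rrbracket}$ --- elements on which a graded-antisymmetric bracket becomes symmetric sit in odd, not even, shifted degree --- but the identity itself is the one the paper uses throughout, so the argument stands.
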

\begin{defn}
 Let $(\frakg, [\cdot, \cdot]_\frakg)$ and $(\frakh, [\cdot, \cdot]_\frakh)$ be two Leibniz algebras and $(\frakh, \rho^L, \rho^R)$ a Leibniz $\frakg$-representation. A crossed homomorphism $H : \frakg \rightarrow \frakh$ is said to be rigid if any
formal deformation $H_t$ of $H$ is equivalent to $H'_t = H$.
\end{defn}
\begin{thm}
Let $H$ be a crossed homomorphism. If $\mathcal{Z}^1_H(\frakg, \frakh)=\delta_{\mathrm{Leib}}(H(Nij(H)))$,  then $H$ is rigid.
\end{thm}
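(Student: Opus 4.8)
The plan is to show that rigidity follows once we know that every 1-cocycle of $H$ is a coboundary of the specific form $\delta_{\mathrm{Leib}}(H(x))$ for some Nijenhuis element $x$. I would start from an arbitrary formal deformation $H_t = H + tH_1 + t^2 H_2 + \cdots$ and use the hypothesis $\mathcal{Z}^1_H(\frakg,\frakh) = \delta_{\mathrm{Leib}}(H(Nij(H)))$: since $H_1$ is a $1$-cocycle in the cohomology of $H$ (as noted right after the definition of formal deformation), there exists $x \in Nij(H)$ with $H_1 = \delta_{\mathrm{Leib}}(H(x)) = -H_1'$ in the notation of Theorem 4.6, i.e. $H_1 = -\delta_{\mathrm{Leib}}(H(-x))$. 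The idea is then to apply the equivalence transformation $\phi_t = \mathrm{id}_\frakg + tL_{-x}$, $\psi_t = \mathrm{id}_\frakh + t\rho^L(-x)$, which is legitimate precisely because $-x$ (equivalently $x$) is a Nijenhuis element and hence satisfies the relations (5), (6), (7) together with the extra bracket condition, so that $\phi_t$, $\psi_t$ are genuine Leibniz automorphisms compatible with the representation. Transporting $H_t$ by this equivalence produces a new formal deformation $\overline{H}_t = H + t\overline{H}_1 + t^2\overline{H}_2 + \cdots$ with $\overline{H}_1 = H_1 - \delta_{\mathrm{Leib}}(H(x)) \cdot (\pm 1)$; choosing the sign as in Theorem 4.6 so that the linear terms cancel, we get $\overline{H}_1 = 0$.

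The second step is the induction: assuming we have reached an equivalent deformation $\overline{H}_t = H + t^n \overline{H}_n + t^{n+1}(\cdots)$ with the first nonvanishing higher term in degree $n \geq 2$, I would read off from the deformation equations that $d_H(\overline{H}_n) = -\frac{1}{2}\sum_{i+j=n,\, i,j\geq 1}\widehat{\llbracket \overline{H}_i, \overline{H}_j\rrbracket} = 0$, since all $\overline{H}_i$ with $1 \le i \le n-1$ vanish. Thus $\overline{H}_n$ is again a $1$-cocycle, so by hypothesis $\overline{H}_n = \delta_{\mathrm{Leib}}(H(x_n))$ for some $x_n \in Nij(H)$, and applying the equivalence $\phi_t = \mathrm{id}_\frakg + t^n L_{-x_n}$, $\psi_t = \mathrm{id}_\frakh + t^n\rho^L(-x_n)$ kills the degree-$n$ term without disturbing degrees $< n$. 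Iterating, $H_t$ is equivalent to $H'_t = H$, so $H$ is rigid.

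The main obstacle I anticipate is verifying that the higher-order equivalences $\phi_t = \mathrm{id}_\frakg + t^n L_{-x_n}$ and $\psi_t = \mathrm{id}_\frakh + t^n\rho^L(-x_n)$ are well-defined formal equivalences of deformations, i.e. that the conditions (i)--(iv) in the definition of equivalence still hold order by order when the parameter is $t^n$ rather than $t$, and that the transported sum is still an honest formal deformation (in particular that $\phi_t$ is invertible over $\frakk[[t]]$, which is automatic, and that the Nijenhuis conditions (5)--(7) plus the bracket identity suffice to guarantee (i)--(iv)). A careful check shows the Nijenhuis relations are exactly what is needed, but this bookkeeping — together with confirming that composing the successive equivalences converges in the $t$-adic topology to a well-defined equivalence $H_t \sim H$ — is the technical heart of the argument. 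The cohomological input (cocycle $\Rightarrow$ coboundary of the prescribed type) does all the conceptual work; the rest is the standard formal-deformation induction adapted to the crossed-homomorphism setting.
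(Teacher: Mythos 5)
Your proposal follows essentially the same route as the paper's proof: use the hypothesis to write the infinitesimal $H_1$ as $-\delta_{\mathrm{Leib}}(H(x))$ for some $x\in Nij(H)$, apply the equivalence $\phi_t=\mathrm{id}_\frakg+tL_x$, $\psi_t=\mathrm{id}_\frakh+t\rho^L(x)$ to cancel the coefficient of $t$, and iterate. You are in fact somewhat more explicit than the paper about the iteration step (the paper only says ``by repeating this argument''), in particular about needing higher-order equivalences $\mathrm{id}+t^nL_{x_n}$ and the $t$-adic convergence of the composed equivalences.
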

\begin{proof}
 Let $H_t = \sum_{i\geq 0}t^iH_i$ be any formal deformation of $H$.   By Proposition 4.8, we deduce $H_1\in \mathcal{ Z}^1(\frakg, \frakh)$. By the assumption
$\mathcal{Z}^1_H(\frakg, \frakh)=\delta_{\mathrm{Leib}}(H(Nij(H)))$, we obtain $H_1 = -\delta_{\mathrm{Leib}}(H(x))$ for some $x \in Nij(H)$. Then setting
$ \phi_t = \mathrm{id}_\frakg + tL_x,  \quad \psi_t=\mathrm{id}_\frakh + t\rho^{L}(x)$, we get a formal deformation $\overline{H}_t := \psi_t\circ H_t \phi_t^{-1}$
Thus, $H_t$ is equivalent to $\overline{H}_t$. Moreover, we have
\begin{eqnarray*}
\overline{H}_t(y)) (\mathrm{mod} ~t^2) &=&  (\mathrm{id}_\frakg+ t\rho^L(x)) \circ (H + tH_1)(y - t[x, y]_\frakg) (\mathrm{mod} ~t^2)\\
&=&(\mathrm{id}_\frakg+ t\rho^L(x)) (H(y) + tH_1(y) - tH([x, y]_\frakg)) (\mathrm{mod}~ t^2)\\
&=& H(y)+t(\rho^L(x)H(y)-H([x, y]_\frakg+H_1(y)).
\end{eqnarray*}
Since
\begin{eqnarray*}
H_1(y)&=& -\delta_{\mathrm{Leib}}(H(x))(y)\\
&=& \rho^{R}(y)H(x)+[H(x), H(y)]_\frakh\\
&=& -(\rho^L(x)H(y)-H([x, y]_\frakg).
\end{eqnarray*}
The coefficient of $t$ in the expression of $\overline{H}_t$
is zero. Then by repeating this argument, one get the equivalence between
$H_t$ and $H$. Hence the proof.
\end{proof}
\bigskip

\subsection{Extensions of finite order deformations}

In this subsection, we introduce a cohomology class associated to any order $N$ deformation of a crossed homomorphism, and show that an order $n$ deformation is extensible if and only if this cohomology class is trivial. Thus, we call this cohomology class the obstruction class of the order $N$ deformation being extensible.

Let $H: \frakg\rightarrow \frakh$  be a crossed homomorphism.  Consider the space $\frakg[[t]]/(t^{N+1})$
which inherits a Leibniz algebra structure over $\mathbf{k}[[t]]/(t^{N+1})$, similarly, $\frakh[[t]]/(t^{N+1})$ is a Leibniz algebra and a Leibniz $\frakg[[t]]/(t^{N+1})$-representation.

\begin{defn}
A deformation of $H$ of order $N$ consists of a finite sum $H_t = \sum_{i=0}^N
t^iH_i$ with $H_0 = H$ such that $H_t$ is a crossed homomorphism from  $\frakg[[t]]/(t^{N+1})$ to $\frakh[[t]]/(t^{N+1})$.
\end{defn}
\begin{defn}
 If  there exists a  linear map $H_{N+1}: \frakg\rightarrow \frakh$ such that $\hat{H}_t = H_t +t^{N+1}H_{N+1}$ is a deformation of order $N+ 1$, we say that $H_t$ is extensible.
\end{defn}

Thus, if a deformation $H_t$ of order $N$ is extensible then one more deformation
equation need to be satisfied, namely,
\begin{eqnarray*}
d_H(H_{N+1})=-\frac{1}{2} \sum_{i+j=N+1, i, j\geq 1} \widehat{\llbracket H_i, H_j\rrbracket}, n\geq 0.
\end{eqnarray*}
for some $H_{N+1}$. Note that the right hand side of the above equation does not
involve $H_{N+1}$,  we  denote it by $Ob_{H_t}$. Obviously $Ob_{H_t}$
is a 2-cochain in the cohomology of $H$,  we have the following.
\begin{prop}
The $2$-cochain $Ob_{H_t}$ is a 2-cocycle, that is, $d_{H}(Ob_{H_t})= 0$.
\end{prop}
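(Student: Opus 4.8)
The plan is to exploit the differential graded Lie algebra structure $(C^{\ast}(\frakg,\frakh),\widehat{\llbracket~,~\rrbracket},d_H)$ directly, rather than computing $d_H(Ob_{H_t})$ by brute force on elements of $\frakg^{\otimes 3}$. The key algebraic fact is that $d_H$ is itself built from a Maurer-Cartan element: by the Proposition identifying crossed homomorphisms with Maurer-Cartan elements of $(\oplus_n\mathrm{Hom}(\frakg^{\otimes n},\frakh),\widehat{\llbracket~,~\rrbracket},d)$, we have $d_H = d + \widehat{\llbracket H,~\rrbracket}$ with $dH + \tfrac12\widehat{\llbracket H,H\rrbracket}=0$, and $d_H$ is a differential, i.e. $d_H\circ d_H=0$, because the dgLa axioms give $d_H^2 = \widehat{\llbracket\, dH+\tfrac12\widehat{\llbracket H,H\rrbracket},~\rrbracket\,} = 0$. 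I would also want the graded Jacobi identity and the (graded) derivation property of $d_H$ with respect to $\widehat{\llbracket~,~\rrbracket}$, both of which hold in any dgLa.

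The second ingredient is to express $Ob_{H_t}$ intrinsically. Setting $\widehat{H}^{(N)}_t = H + tH_1 + \cdots + t^N H_N$, the deformation equations for an order $N$ deformation say precisely that $d_H(H_m) = -\tfrac12\sum_{i+j=m,\ i,j\geq1}\widehat{\llbracket H_i,H_j\rrbracket}$ for $1\leq m\leq N$, and $Ob_{H_t} = -\tfrac12\sum_{i+j=N+1,\ i,j\geq1}\widehat{\llbracket H_i,H_j\rrbracket}$. So I would introduce the generating-function element $\mathcal{H} = \sum_{i\geq1}t^iH_i$ over $\bfk[[t]]$ (or work formally), and observe that the full collection of deformation equations through order $N$ plus the obstruction is equivalent to the single congruence $d_H(\mathcal{H}) + \tfrac12\widehat{\llbracket\mathcal{H},\mathcal{H}\rrbracket} \equiv -t^{N+1}Ob_{H_t} \pmod{t^{N+2}}$.

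Now I apply $d_H$ to this. Using $d_H^2=0$ and the graded derivation property $d_H\widehat{\llbracket a,b\rrbracket} = \widehat{\llbracket d_H a,b\rrbracket} \pm \widehat{\llbracket a, d_H b\rrbracket}$, I get $d_H\big(d_H\mathcal{H} + \tfrac12\widehat{\llbracket\mathcal{H},\mathcal{H}\rrbracket}\big) = \widehat{\llbracket d_H\mathcal{H},\mathcal{H}\rrbracket}$ (the sign bookkeeping works out because $\mathcal{H}$ has degree $1$ in $C^{\ast}$). Substituting $d_H\mathcal{H} \equiv -\tfrac12\widehat{\llbracket\mathcal{H},\mathcal{H}\rrbracket} \pmod{t^{N+1}}$ and invoking the graded Jacobi identity, $\widehat{\llbracket\widehat{\llbracket\mathcal{H},\mathcal{H}\rrbracket},\mathcal{H}\rrbracket}=0$, I conclude $d_H\big(d_H\mathcal{H}+\tfrac12\widehat{\llbracket\mathcal{H},\mathcal{H}\rrbracket}\big)\equiv 0$ modulo $t^{N+2}$; comparing the coefficient of $t^{N+1}$ yields $d_H(Ob_{H_t})=0$.

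The main obstacle will be the careful sign and degree bookkeeping: $H$ and the $H_i$ live in $C^1(\frakg,\frakh)=\mathrm{Hom}(\frakg,\frakh)$, which corresponds to degree $0$ in the shifted grading underlying the Balavoine-type bracket, so one must track exactly which sign conventions in $\widehat{\llbracket~,~\rrbracket}$ and in $d_H\circ d_H = 0$ are in force, and verify that the cross terms $\widehat{\llbracket H_i,H_j\rrbracket}$ with $i+j\leq N$ cancel against $d_H(H_m)$ contributions exactly as the order-$N$ deformation equations assert. An alternative, more pedestrian route — expanding $d_H(Ob_{H_t})(x_1,x_2,x_3)$ using the explicit formula for $\delta_{\mathrm{Leib}}$ from the previous Proposition and the order-$N$ equations — is available as a fallback, but it is long; I would present the dgLa argument as the clean proof and relegate sign verification to a remark if needed.
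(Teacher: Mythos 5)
Your argument is correct and rests on exactly the same ingredients as the paper's proof: the graded derivation property of $d_H=d+\widehat{\llbracket H,\cdot\rrbracket}$, the graded Jacobi identity for $\widehat{\llbracket\cdot,\cdot\rrbracket}$, and the order-$N$ deformation equations $d_H(H_m)=-\tfrac12\sum_{i+j=m,\,i,j\geq 1}\widehat{\llbracket H_i,H_j\rrbracket}$. The only difference is presentational — you package the coefficients into the generating element $\mathcal{H}=\sum_{i\geq 1}t^iH_i$ and read off the coefficient of $t^{N+1}$ from a Bianchi-type identity, whereas the paper expands $d_H(Ob_{H_t})$ term by term and cancels the resulting triple-bracket sum directly — so unwinding your computation coefficient by coefficient reproduces the paper's proof.
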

\begin{proof}We have
\begin{eqnarray*}
&&d_{H}(Ob_{H_t})\\
&=&-\frac{1}{2}\sum_{i+j=N+1, i, j\geq 1}(d \widehat{\llbracket H_i, H_j\rrbracket}+ \widehat{\llbracket H, \widehat{\llbracket H_i, H_j\rrbracket} \rrbracket})\\
&=& -\frac{1}{2}\sum_{i+j=N+1, i, j\geq 1}(\widehat{\llbracket d(H_i), H_j\rrbracket}-\widehat{\llbracket H_i, d(H_j)\rrbracket}+\widehat{\llbracket \widehat{\llbracket H, H_i\rrbracket}, H_j\rrbracket}-\widehat{\llbracket H_i, \widehat{\llbracket H, H_j\rrbracket} \rrbracket})\\
&=& -\frac{1}{2}\sum_{i+j=N+1, i, j\geq 1}(\widehat{\llbracket d_H(H_i), H_j\rrbracket}-\widehat{\llbracket H_i, d_H(H_j)\rrbracket})\\
&=& \frac{1}{4}\sum_{i_1+i_2+j=N+1, i_1,i_2, j\geq 1}\widehat{\llbracket \widehat{\llbracket H_{i_1}, H_{i_2}\rrbracket}, H_j\rrbracket} -\frac{1}{4}\sum_{i+j_1+j_{2}=N+1, i_1,i_2, j\geq 1} \widehat{\llbracket H_i, \widehat{\llbracket H_{j_1}, H_{j_2}\rrbracket} \rrbracket}\\
&=& \frac{1}{2}\sum_{i+j+k=N+1, i, j, k\geq 1}\widehat{\llbracket \widehat{\llbracket H_{i}, H_{j}\rrbracket}, H_k\rrbracket}\\
&=&0.
\end{eqnarray*}
\end{proof}
The cohomology class $[Ob_{H_t}] \in \mathcal{H}_H^2(\frakg, \frakh)$ is called the {\bf obstruction class} to
extend the deformation $H_t$.  we have the
following.
\begin{thm} A finite order deformation $H_t$ of a crossed homomorphism $H$ extends
to a deformation of next order if and only if the obstruction class $[Ob_{H_t}] \in \mathcal{H}_H^2(\frakg, \frakh)$
vanishes.
\end{thm}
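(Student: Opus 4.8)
The plan is to prove both directions using the dgLa structure established earlier in the paper, working with the differential $d_H$ on $(C^{\ast}(\frakg,\frakh),\widehat{\llbracket\,,\,\rrbracket})$ and the explicit description of the deformation equations. The governing identity is that $\hat H_t=H_t+t^{N+1}H_{N+1}$ is an order $N+1$ deformation precisely when, in addition to the order $\le N$ equations (which already hold since $H_t$ is an order $N$ deformation), one has
\begin{eqnarray*}
d_H(H_{N+1})=-\frac12\sum_{i+j=N+1,\ i,j\ge 1}\widehat{\llbracket H_i,H_j\rrbracket}=Ob_{H_t},
\end{eqnarray*}
where the right-hand side depends only on $H_1,\dots,H_N$. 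So the content of the theorem is that this equation is solvable for $H_{N+1}\in C^1(\frakg,\frakh)$ if and only if $[Ob_{H_t}]=0$ in $\mathcal H_H^2(\frakg,\frakh)$.

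For the ``only if'' direction I would argue: if $H_t$ is extensible, then by definition some $H_{N+1}$ satisfies $d_H(H_{N+1})=Ob_{H_t}$, so $Ob_{H_t}$ lies in $\mathcal B_H^2(\frakg,\frakh)$, hence $[Ob_{H_t}]=0$. This uses only that $Ob_{H_t}$ is, by the previous Proposition, already known to be a $2$-cocycle, so the class is well defined. For the ``if'' direction: if $[Ob_{H_t}]=0$, then $Ob_{H_t}=d_H(H_{N+1})$ for some $H_{N+1}\in C^1(\frakg,\frakh)$; I would then set $\hat H_t=H_t+t^{N+1}H_{N+1}$ and verify that all deformation equations up to order $N+1$ hold. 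The equations of order $\le N$ are unchanged (adding $t^{N+1}H_{N+1}$ contributes nothing in degrees $\le N$ to $d_H(H_n)+\frac12\sum_{i+j=n,i,j\ge1}\widehat{\llbracket H_i,H_j\rrbracket}$ because $H_{N+1}$ only enters brackets $\widehat{\llbracket H_i,H_{N+1}\rrbracket}$ with $i+N+1>N$), and the order $N+1$ equation is exactly $d_H(H_{N+1})=Ob_{H_t}$, which holds by the choice of $H_{N+1}$. Hence $\hat H_t$ is an order $N+1$ deformation and $H_t$ is extensible.

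The main technical point — really the only step requiring care rather than bookkeeping — is the precise separation of the deformation equation by powers of $t$: one must check that enlarging $H_t$ to $H_t+t^{N+1}H_{N+1}$ genuinely leaves the lower-order Maurer–Cartan conditions intact and isolates the new constraint as the single equation $d_H(H_{N+1})=Ob_{H_t}$. This is where the earlier identification (Proposition above) of $d_H$ with the derived differential and the expansion of $\widehat{\llbracket H_t,H_t\rrbracket}$ as $\sum_{n}t^n\sum_{i+j=n}\widehat{\llbracket H_i,H_j\rrbracket}$ is used; once that expansion is in hand the argument is a direct comparison of coefficients. I would write the proof in two short paragraphs, one per implication, citing the preceding Proposition for the cocycle property of $Ob_{H_t}$ and the Maurer–Cartan characterization of crossed homomorphisms for the equivalence between ``$\hat H_t$ is a crossed homomorphism over $\frakg[[t]]/(t^{N+2})$'' and the system of equations in degrees $0,1,\dots,N+1$.
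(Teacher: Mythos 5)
Your proposal is correct and is exactly the argument the paper intends: the paper itself omits the proof of this theorem, but the preceding discussion (the reduction of extensibility to the single equation $d_H(H_{N+1})=Ob_{H_t}$, the observation that the right-hand side does not involve $H_{N+1}$, and the proposition that $Ob_{H_t}$ is a $2$-cocycle) sets up precisely your two-implication coefficient-comparison argument. Your added care in checking that the term $t^{N+1}H_{N+1}$ does not disturb the equations in degrees $\le N$ is the one point the paper leaves implicit, and it is handled correctly.
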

\begin{cor}
If $\mathcal{H}_H^2(\frakg, \frakh) = 0$,  then any finite order deformation of $H$ extends to a
deformation of next order.
\end{cor}
\begin{thm} If $\mathcal{H}_H^2(\frakg, \frakh) = 0$, then every 1-cocycle in the cohomology of $H$ is the
infinitesimal of some formal deformation of $H$.
\end{thm}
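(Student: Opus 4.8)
The plan is to start from a 1-cocycle $H_1 \in \mathcal{Z}^1_H(\frakg, \frakh)$ and build, step by step, a sequence of linear maps $H_2, H_3, \ldots$ so that the truncated sums $H_t^{(N)} = H + tH_1 + \cdots + t^N H_N$ satisfy the order-$N$ deformation equations, and then pass to the formal power series $H_t = H + \sum_{n\geq 1} t^n H_n$. Concretely, I would proceed by induction on $N$: the base case $N=1$ is exactly the hypothesis $d_H(H_1)=0$, which says $H+tH_1$ is a deformation of order $1$. For the inductive step, suppose $H_t^{(N)}$ is a deformation of order $N$, i.e. $d_H(H_n) = -\frac12 \sum_{i+j=n,\, i,j\geq 1} \widehat{\llbracket H_i, H_j\rrbracket}$ for all $1\leq n \leq N$. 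Then $H_t^{(N)}$ is a finite order deformation in the sense of Definition 4.14, and by Proposition 4.16 its obstruction cochain $Ob_{H_t^{(N)}} = -\frac12 \sum_{i+j=N+1,\, i,j\geq 1} \widehat{\llbracket H_i, H_j\rrbracket}$ is a $2$-cocycle, hence defines a class in $\mathcal{H}^2_H(\frakg, \frakh)$.

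The key point is now to invoke the hypothesis $\mathcal{H}^2_H(\frakg, \frakh) = 0$. This forces $[Ob_{H_t^{(N)}}] = 0$, so by Theorem 4.17 (equivalently, directly: the cocycle is a coboundary $d_H(H_{N+1})$ for some $H_{N+1}\in C^1(\frakg, \frakh)$) the order-$N$ deformation $H_t^{(N)}$ extends to a deformation $H_t^{(N+1)} = H_t^{(N)} + t^{N+1}H_{N+1}$ of order $N+1$. Choosing such an $H_{N+1}$ at each stage, the induction produces all the coefficients $H_n$, $n\geq 1$, and the resulting formal sum $H_t = H + tH_1 + t^2 H_2 + \cdots$ satisfies $d_H(H_n) = -\frac12 \sum_{i+j=n,\, i,j\geq 1}\widehat{\llbracket H_i, H_j\rrbracket}$ for every $n\geq 0$, which by the computation preceding Definition 4.6 is precisely the condition for $H_t$ to be a formal one-parameter deformation of $H$. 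Since the linear term of $H_t$ is the given $H_1$, this exhibits $H_1$ as the infinitesimal of a formal deformation.

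I would present this essentially as a short argument: state the induction, quote Proposition 4.16 for the cocycle condition on $Ob_{H_t^{(N)}}$, quote Theorem 4.17 (or its corollary) for extensibility under the vanishing of $\mathcal{H}^2_H$, and conclude. The only genuinely delicate point — and the one I would be careful to phrase correctly — is the bookkeeping that ties together the ``order $N$ deformation equation'' for all $n\leq N$ with the single new equation $d_H(H_{N+1}) = Ob_{H_t^{(N)}}$ coming from extensibility; one must observe that extending to order $N+1$ does not disturb the lower-order identities, which is immediate since those equations do not involve $H_{N+1}$. Everything else is a direct appeal to results already established in Section 4, so there is no serious obstacle; the proof is really a formal induction using $\mathcal{H}^2_H(\frakg,\frakh)=0$ to kill every obstruction class in turn.
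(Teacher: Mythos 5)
Your proof is correct and is exactly the intended argument: the paper states this theorem without proof, but it follows from Corollary 4.18 (vanishing of $\mathcal{H}^2_H(\frakg,\frakh)$ kills every obstruction class) together with the observation that a $1$-cocycle $H_1$ makes $H+tH_1$ an order-$1$ deformation, and your induction assembles these extensions into a formal deformation with infinitesimal $H_1$. Nothing to add.
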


\bigskip

\noindent
{{\bf Acknowledgments.} The work is supported by Natural Science Foundation of China (Grant Nos. 11871301, 12271292).

\end{document}